\newtheorem{theorem}{Theorem}[section]
\newtheorem{lemma}[theorem]{Lemma}
\newtheorem{proposition}[theorem]{Proposition}
\newtheorem{conjecture}[theorem]{Conjecture}
\newtheorem{claim}[theorem]{Claim}
\newtheorem{definition}[theorem]{Definition}
\numberwithin{equation}{section}
\newcounter{capitalcounter}
\newcommand{\N}{\mathbb{N}}
\newcommand{\R}{\mathbb{R}}
\newcommand{\KK}{\mathcal{K}}
\newcommand{\HH}{\mathcal{H}}
\newcommand{\VV}{\mathcal{V}}
\renewcommand{\AA}{\mathcal A}
\newcommand{\BB}{\mathcal B}
\renewcommand{\d}{\delta}
\renewcommand{\a}{\alpha}
\renewcommand{\epsilon}{\varepsilon}
\newcommand{\e}{\epsilon}
\renewcommand{\subset}{\subseteq}
\newcommand{\COMMENT}[1]{}
\renewcommand{\COMMENT}{\footnote} 
\title{Fractional Clique Decompositions of Dense Partite Graphs}
\author{Richard Montgomery}
 \address{Trinity College, Cambridge, CB2 1TQ}
 \email{rhm34@cam.ac.uk}
\thanks{The research leading to these results was partially supported by the  European Research Council under the European Union's Seventh Framework Programme (FP/2007--2013) / ERC Grant Agreement n.\ 258345.}
\begin{document}
\date{\today}

\begin{abstract}
We give a minimum degree condition sufficent to ensure the existence of a fractional $K_r$-decomposition in a balanced $r$-partite graph (subject to some further simple necessary conditions). This generalises the non-partite problem studied recently by Barber, Lo, K\"uhn, Osthus and the author, and the $3$-partite fractional $K_3$-decomposition problem studied recently by Dukes. Combining our result with recent work by Barber, K\"uhn, Lo, Osthus and Taylor, this gives a minimum degree condition sufficient to ensure the existence of a (non-fractional) $K_r$-decomposition in a balanced $r$-partite graph (subject to the same simple necessary conditions).
\end{abstract}

\maketitle 

\section{Introduction}

Given a graph $F$, we say a graph $G$ has an \emph{$F$-decomposition} if there is a collection of edge-disjoint copies of $F$ in $G$ that covers all the edges of $G$. The study of $F$-decompositions dates back to 1847, when Kirkman~\cite{Kirkman} showed that the $n$-vertex clique~$K_n$ has a $K_3$-decomposition if and only if $n\equiv 1,3 \mod 6$. Much later, Wilson~\cite{Wilson} was able to determine whether a complete graph $K_n$ has an $F$-decomposition for any graph $F$, when $K_n$ is large compared to $F$. A hypergraph generalisation regarding the decomposition of large cliques into smaller cliques has only recently been achieved, in a breakthrough by Keevash~\cite{Keevash}.

Progress has also recently been made by Barber, K\"uhn, Lo and Osthus~\cite{BKLO} in finding $F$-decompositions of large graphs which are not complete, but have a high minimum degree. A key component in these new methods, as explained later, is to first find a relevant \emph{fractional decomposition} of the large graph.
A graph $G$ has a \emph{fractional $F$-decomposition} if a weighting can be given to the copies of $F$ in $G$ so that each edge lies in copies of $F$ with total weight $1$. That is, if $\mathcal{F}(G)$ is the set of copies of $F$ in $G$, then there is a function $\omega:\mathcal{F}(G)\to [0,1]$ so that, for each edge $e\in E(G)$, $\sum_{F\in \mathcal{F}(G):e\in E(F)}\omega(F)=1$. 

For example, any clique $K_n$ can be seen to have a fractional $K_r$-decomposition if $n\geq r\geq 2$ by simply weighting all the copies of $K_r$ in $K_n$ by $1/\binom{n-2}{r-2}$. In fact, any large graph with a sufficiently high minimum degree has a fractional $K_r$-decomposition. First shown by Yuster~\cite{Yuster}, the required minimum degree was improved by Dukes~\cite{Dukes12,Dukescorr}, before Barber, K\"uhn, Lo, Osthus and the current author~\cite{BKLMO} showed that any graph $G$ on $n\geq 10^4r^3$ vertices with minimum degree $\delta(G)\geq (1-1/10^4r^{3/2})n$ has a fractional $K_r$-decomposition. On the other hand, Yuster~\cite{Yuster} has constructed graphs showing that for each $\e>0$ and integers $r$ and~$n_0$ there is, for some $n\geq n_0$, an $n$-vertex graph $G$ with minimum degree $\delta(G)\geq (r/(r+1)+\e)n$ without a fractional $K_r$-decomposition. In the particular case $r=3$, the minimum degree required to ensure a fractional $K_3$-decomposition was improved by Yuster~\cite{Yuster}, Dukes~\cite{Dukes12,Dukescorr} and Garaschuk~\cite{Garaschuk}, before Dross~\cite{Dross} proved that any $n$-vertex graph $G$ with minimum degree $\delta(G)\geq 9n/10$ has a fractional $K_3$-decomposition.

In this paper, we study the minimum degree required to ensure a fractional $K_r$-decomposition of \emph{$r$-partite} graphs, where, unlike in the non-partite case, the graph must necessarily satisfy a further simple condition. In combination with recent work by Barber, K\"uhn, Lo, Osthus and Taylor~\cite{BKLOT} this gives good bounds on the minimum degree required to ensure a $K_r$-decomposition of $r$-partite graphs satisfying the same necessary condition.

We say an $r$-partite graph $G$ on a vertex partition $(V_1,\ldots,V_r)$ is \emph{balanced} if $|V_1|=\ldots=|V_r|$. If such a graph $G$ has a fractional $K_r$-decomposition $\omega:\mathcal{F}(G)\to [0,1]$, where $\mathcal{F}(G)$ is the set of copies of $K_r$ in $G$, then for each $i\in[r]$, $v\in V_i$ and $j\in [r]\setminus \{i\}$ we have
\[
\sum_{K\in \mathcal{F}(G):v\in V(K)}\omega(K)=\sum_{u\in V_j\cap N(v)}\sum_{K\in \mathcal{F}(G):uv\in E(K)}\omega(K)=\sum_{u\in V_j\cap N(v)}1=d(v,V_j),
\]
where $d(v,V_j)$ is the number of neighbours of $v$ in $V_j$. Therefore, if $G$ has a fractional $K_r$-decomposition then the degree of each vertex $v\in V_i$ must be the same into each other vertex class.
Let us say that an $r$-partite graph~$G$ with partition $(V_1,\ldots,V_r)$ is \emph{$K_r$-divisible} if it has this property, that is, if for every $i,j\in [r]$ and $v\in V(G)\setminus (V_i\cup V_j)$ we have $d(v,V_{i})=d(v,V_{j})$. Thus, an $r$-partite graph must be $K_r$-divisible if it has a fractional $K_r$-decomposition. 

Given an $r$-partite graph $G$ with partition $(V_1,\ldots,V_r)$, let
\[
\hat{\d}(G)=\min\{d(v,V_j):j\in [r],v\in V(G)\setminus V_j\}.
\]
Note that if such a graph $G$ is $K_r$-divisible then the minimum degree of~$G$ is $(r-1)\hat{\d}(G)$.
We will show that if $G$ is a balanced $K_r$-divisible $r$-partite graph in which $\hat{\d}(G)$ is sufficiently high (though potentially distinctly smaller than the size of the vertex classes), then $G$ has a fractional $K_r$-decomposition.

\begin{theorem}\label{fracdecomp} Let $r\geq 3$ and $n\in \N$. If~$G$ is a $K_r$-divisible $r$-partite graph on $(V_1,\ldots,V_r)$, where $|V_1|=\ldots=|V_r|=n$ and $\hat{\d}(G)\geq (1-1/10^6r^{3})n$, then~$G$ has a fractional $K_r$-decomposition.
\end{theorem}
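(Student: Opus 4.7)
The plan is to mimic, and adapt to the partite setting, the approach used to prove the non-partite version in~\cite{BKLMO}. The fractional decomposition is built as $\omega = \omega_0 + \omega_1$, where $\omega_0$ is a natural warm-start weighting giving approximately correct edge weights, and $\omega_1$ is a correction term assembled from local adjustments.

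For the warm start, I would assign to every transversal copy of $K_r$ in $G$ the same constant weight~$c$. The condition $\hat{\d}(G)\geq (1-1/10^6 r^3)n$ forces every edge to lie in nearly the same number of transversal copies of $K_r$, so for a suitable choice of~$c$ the induced edge weights all lie within a small additive error of $1$. The divisibility hypothesis is what guarantees that the all-ones vector $\mathbf{1}\in\R^{E(G)}$ lies in the subspace of signed edge weightings attainable by weighting transversal copies of $K_r$; without it, no exact fractional decomposition can exist.

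The correction step produces a signed weighting $\omega_1:\mathcal{F}(G)\to \R$ whose induced edge weights cancel the errors of $\omega_0$. The basic gadget is the \emph{swap}: if $K,K'$ are two transversal copies of $K_r$ agreeing on all but one vertex --- say they contain $v,v'\in V_i$ respectively, with the other $r-1$ vertices shared --- then the signed weighting $+\epsilon\cdot K-\epsilon\cdot K'$ adjusts the $r-1$ edges incident to $v$ by $+\epsilon$ and the $r-1$ edges incident to $v'$ by $-\epsilon$, leaving all other edges untouched. Any signed edge weighting respecting the $K_r$-divisibility conservation law is a linear combination of such swaps, so chaining them along suitable ``paths'' in an auxiliary graph lets us shift error between any two edges whose weights can in principle be exchanged.

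The main obstacle is to carry out all these corrections simultaneously while (i) keeping $\omega=\omega_0+\omega_1$ pointwise non-negative and (ii) ensuring the individual contributions stay bounded. Both depend on finding, between any two edges to be balanced, many (essentially) vertex-disjoint swap-paths of bounded length, so that each correction can be distributed thinly across many gadgets. The high minimum-degree assumption is used here, both to guarantee that each edge lies in many transversal $K_r$'s and that enough vertex-disjoint gadgets are available for every pair of edges. A careful accounting --- which is where the specific numerical bound $1/10^6 r^3$ arises --- shows that the total correction applied to any single copy of $K_r$ is much smaller than $c$, so that $\omega_0+\omega_1$ remains non-negative and hence gives a valid fractional $K_r$-decomposition.
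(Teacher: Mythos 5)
Your outline captures the correct high-level shape — a uniform warm-start weighting, plus a signed correction assembled from local gadgets, with non-negativity preserved because every clique adjustment stays small — and your ``swap'' gadget is exactly the \emph{simple gadget} from which the paper's first gadget (Lemma~\ref{weightK2r}) is built. But the proposal stops at precisely the point where the partite setting bites, and the central assertions are left unjustified.

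First, the statement that ``any signed edge weighting respecting the $K_r$-divisibility conservation law is a linear combination of swaps, so chaining them along paths lets us shift error between any two edges'' is a span/decomposition claim that needs proof, and more importantly the \emph{path between two edges} abstraction does not actually fit the $r$-partite constraint structure. A single swap changes $2(r-1)$ edges in a rigidly correlated pattern, not $2$. The divisibility condition at every vertex forbids adjusting the weight on one edge in isolation, so one must decompose the per-vertex correction vector $(z_{vu})_{u\in N(v)}$ into a mix of ``move $r-1$ edges from $v$ to $v'$ together'' steps and ``redistribute among two edges from $v$ inside a single class'' steps; this is the content of Lemma~\ref{weightdecomp} and requires the second ($4$-cycle) gadget of Lemma~\ref{weight4cycle} in addition to your swap. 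Your plan mentions only the swap and waves at ``chaining.''

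Second, the swap $+\epsilon K-\epsilon K'$ requires the $r-1$ shared vertices to form a clique and to lie in $N(v)\cap N(v')$, which is typically false for arbitrary target vertices $u_j$. The paper resolves this by averaging each gadget over auxiliary cliques $A$ (see the sets $\HH$ in Lemmas~\ref{weightK2r}--\ref{weight4cycle}); this averaging is what keeps the $\ell^\infty$-norm of the clique adjustment at size $O(n^2/k_{[r]})$ rather than order one. Your phrase ``distributed thinly across many gadgets'' gestures at this but does not supply it.

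Third, the bookkeeping mechanism is structurally different from what you describe. Rather than balancing pairs of edges against each other along vertex-disjoint paths, the paper picks one target clique $K$, uses the gadgets to funnel all required edge-corrections onto the $\binom{r}{2}$ edges of $K$ — where they cancel by the zero-sum property forced by $K_r$-divisibility (Lemmas~\ref{allweightinto1set} and~\ref{weightonto1clique}) — and then \emph{averages the resulting correction over all choices of $K$}. The final averaging is what makes the per-clique adjustment uniformly small and hence preserves non-negativity; this is the step producing the numerical bound $1/10^6r^3$. Without some concrete replacement for this, claim (ii) of your ``main obstacle'' paragraph — that the individual contributions stay bounded — is simply asserted. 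That is the genuine gap.
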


Barber, K\"uhn, Lo and Osthus~\cite{BKLO} have created a method of iterative absorption capable of turning approximate $F$-decompositions (edge-disjoint copies of $F$ in $G$ which cover most of the edges of $G$) of certain dense graphs into $F$-decompositions. Haxell and R\"{o}dl~\cite{HaRo} have shown that (roughly speaking) large dense graphs with fractional $K_{\chi(F)}$-decompositions have approximate $F$-decompositions, where $\chi(F)$ is the chromatic number of~$F$. Thus, a dense graph with a fractional $K_{\chi(F)}$-decomposition must have an approximate $F$-decomposition, and hence, subject to a simple necessary condition, an $F$-decomposition~\cite{BKLO}. Recently, Barber, K\"uhn, Lo, Osthus and Taylor~\cite{BKLOT} have adapted and extended the methods in~\cite{BKLO} in order to apply them to $r$-partite graphs. In combination with Theorem~\ref{fracdecomp}, this gives the following result.

\begin{theorem}\label{decomp} For every $r\geq 3$ and $\e>0$, there exists an $n_0\in \N$ such that the following holds for all $n\geq n_0$. If~$G$ is a $K_r$-divisible $r$-partite graph on $(V_1,\ldots,V_r)$, where $|V_1|=\ldots =|V_r|=n$ and $\hat{\d}(G)\geq (1-1/10^6r^{3}+\e)n$, then~$G$ has a $K_r$-decomposition.
\end{theorem}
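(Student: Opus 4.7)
The plan is to deduce Theorem~\ref{decomp} by feeding Theorem~\ref{fracdecomp} into the partite iterative-absorption framework of~\cite{BKLOT}. The real work of the paper is Theorem~\ref{fracdecomp}; what I describe below is essentially the bookkeeping that matches up hypotheses.

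First, given $r\geq 3$, $\e>0$ and a balanced $K_r$-divisible $r$-partite graph $G$ with $\hat{\d}(G)\geq(1-1/10^6r^3+\e)n$, the minimum degree is comfortably above the threshold of Theorem~\ref{fracdecomp}, so $G$ itself admits a fractional $K_r$-decomposition. Crucially, the $\e n$ of slack means the same conclusion is available for every $K_r$-divisible subgraph obtained from $G$ by deleting up to, say, $\e n/2$ neighbours per vertex in each other vertex class; this robustness is what the absorption will consume. Next, I would invoke the theorem of Haxell and R\"odl~\cite{HaRo} to convert any such fractional $K_r$-decomposition into an \emph{approximate} $K_r$-decomposition: a collection of edge-disjoint copies of $K_r$ covering all but $o(n^2)$ edges.

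The main step, and the main obstacle, is bridging the gap from an approximate $K_r$-decomposition to an exact one. This is carried out by the iterative absorption machinery of~\cite{BKLOT}, which is the partite extension of the framework of~\cite{BKLO}, and I would invoke it as a black box in the standard way: reserve an absorbing subgraph $A\subset G$ engineered so that, for every sufficiently small $K_r$-divisible leftover graph $L$ on $V(G)$ with $E(L)\cap E(A)=\emptyset$, the graph $A\cup L$ admits a $K_r$-decomposition; apply Haxell--R\"odl to the residue $G-A$ (which, by the robustness noted above, still satisfies the hypothesis of Theorem~\ref{fracdecomp}) to get an approximate $K_r$-decomposition, leaving some small residual $L$; and finally $K_r$-decompose $A\cup L$ using the absorbing property. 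The $\e$-slack in $\hat{\d}(G)$ is exactly what permits the reservation of $A$ without destroying the fractional-decomposition hypothesis on $G-A$.

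The hard point to verify will be that the partite iterative absorption of~\cite{BKLOT} really accepts Theorem~\ref{fracdecomp} as its fractional-decomposition input in the form stated, with the same minimum-degree constant $1/10^6r^3$ and measured in terms of $\hat{\d}$ rather than the ordinary minimum degree. Since~\cite{BKLOT} is designed precisely to package the transfer from fractional to exact $K_r$-decompositions in the balanced $r$-partite $K_r$-divisible regime, and since it quotes its hypothesis in terms of $\hat{\d}(G)$, I expect the matching of constants to be immediate, so that the proof of Theorem~\ref{decomp} reduces to citing~\cite{BKLOT} with Theorem~\ref{fracdecomp} in the role of the fractional decomposition lemma.
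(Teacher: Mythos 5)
Your proposal is correct and matches the paper's own (implicit) proof: Theorem~\ref{decomp} is deduced exactly by plugging Theorem~\ref{fracdecomp} into the partite iterative-absorption theorem of Barber, K\"uhn, Lo, Osthus and Taylor~\cite{BKLOT}, with the $\e n$ slack absorbed by that framework. The paper does not spell out the bookkeeping any further than you do, so there is nothing missing.
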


Dukes~\cite{Dukespart} has shown that if a $K_3$-divisible $3$-partite graph $G$ with $n$ vertices in each class satisfies $\hat{\delta}(G)\geq 101n/104$, then $G$ has a fractional $K_3$-decomposition. This is a better bound on the required minimum degree than that given in Theorem~\ref{fracdecomp} when $r=3$, and thus, when combined with the work of Barber, K\"uhn, Lo, Osthus and Taylor~\cite{BKLOT}, results in a better bound than that given in Theorem~\ref{decomp} when $r=3$. As noted in~\cite{Dukespart}, the methods introduced by Dukes can be used more generally to find fractional $K_r$-decompositions of dense $r$-partite graphs when $r\geq 3$. While this gives a better bound than that given in Theorem~\ref{fracdecomp} for small values of $r$, for large values of~$r$ this will give a weaker bound~\cite{Dukespart}.
In Theorem~\ref{decomp}, the case where $r=3$ is particularly interesting. As a corollary we may deduce that partially completed latin squares in which each symbol, row and column is used a limited number of times can be completed~\cite{BKLOT,Dukespart}. When $r\geq 4$, Theorem~\ref{decomp} permits a similar deduction to be made about the completion of $r-2$ mutually orthogonal latin squares (see~\cite{BKLOT} for details).

The $r$-partite version of the fractional $K_r$-decomposition problem can be viewed as a generalisation of the comparable non-partite problem, in the following way. Given a graph~$G$ with minimum degree $\delta(G)$, take $r$ disjoint copies of $V(G)$ to get the vertex set of a new graph $\hat{G}$, and let two vertices from different copies of $V(G)$ be connected by an edge in $\hat{G}$ if there is an edge between the corresponding vertices in~$G$. Considering this construction, we can see that $\hat{G}$ is $K_r$-divisible, $\hat{\delta}(\hat{G})= \delta(G)$, and~$\hat{G}$ has a fractional $K_r$-decomposition if and only if $G$ does. Thus, if the graph~$G$ has a sufficiently high minimum degree then we can apply Theorem~\ref{fracdecomp} to $\hat{G}$ to show that $G$ has a fractional $K_r$-decomposition.

To prove Theorem~\ref{fracdecomp}, we will use an idea introduced by Dross~\cite{Dross} and developed in a more general setting by Barber, K\"uhn, Lo, Osthus, and the author~\cite{BKLMO}.
Roughly speaking, to find a fractional $K_r$-decomposition of a (non-partite or $r$-partite) graph~$G$ we begin by uniformly weighting the copies of $K_r$ in $G$, so that the weight on the individual edges (defined as the sum of the weights of the copies of $K_r$ containing that edge) is on average 1. The weight on some edges will be greater than 1, and the weight on some edges will be less than 1, but if we have a sufficiently strong minimum degree condition then the weight on each edge will be close to 1 (as each edge is in a similar number of copies of $K_r$).
Furthermore, each copy of $K_r$ in $G$ has a strictly positive weight, allowing us to both increase and decrease the weights of the copies of $K_r$ while maintaining a non-negative weighting. In making such changes we aim to correct the weight on each edge to get a fractional $K_r$-decomposition. In~\cite{BKLMO}, the corrections were made to the weight on each edge $e\in E(G)$ in turn, making sure that at each stage only the weight on $e$ was adjusted, not the weight on any other edges. To make the corrections in~\cite{BKLMO}, it was critical that in a non-partite graph with a high minimum degree each edge was in many copies of $K_{r+2}$. As there are no copies of $K_{r+2}$ in an $r$-partite graph, we will need a new method to make these adjustments. In fact, we are unable to adjust the weight on an individual edge without changing the weight on some other edges and we will therefore make adjustments to the weight on multiple edges simultaneously. A sketch of our method is given in Section~\ref{sec:sketch}.


For each $s\geq r$, we could also ask more generally what minimum degree is needed in a balanced $s$-partite graph to ensure a fractional $K_r$-decomposition (subject perhaps to some necessary conditions). If $s\geq r+2$ then a comparable minimum degree condition to that in Theorem~\ref{fracdecomp} will ensure each edge is in many copies of $K_{r+2}$, whereupon the methods in~\cite{BKLMO} can be used directly. In particular, this method could find a fractional $K_r$-decomposition using a minimum degree bound depending on $r$, but not on $s$. When $s=r+1$ the situation is more complicated, and there may not be a simple set of divisibility conditions distinguishing which large graphs with a high minimum degree have a fractional $K_r$-decomposition.

The authors of~\cite{BKLMO} developed the basic method outlined above to reduce the minimum degree required in the non-partite setting.
It is likely that improvements along these lines could be made to our methods here to improve Theorem~\ref{fracdecomp}. However, these improvements would neither introduce any new ideas nor achieve a plausibly optimal bound, while obscuring the necessary changes due to the partite setting. Therefore, we will limit ourselves to a brief discussion of these possibilities in Section~\ref{sec:improve}.

Barber, K\"uhn, Lo, Osthus and Taylor~\cite{BKLOT} have conjectured that the minimum degree bound in Theorem~\ref{decomp} could be replaced by $\hat{\delta}(G)\geq (1-1/(r+1))n$ when~$n$ is sufficiently large. 
To show this would be optimal, they exhibited a balanced $r$-partite graph $G$ with $rn$ vertices and $\hat{\delta}(G)= \lceil(1-1/(r+1))n\rceil-1$ which has no $K_r$-decomposition. The same graph also has no fractional $K_r$-decomposition (see~\cite[Section 3.1]{BKLOT}), and in light of this we make the following, weaker, conjecture.

\begin{conjecture}\label{conj}
For every $r\geq 3$ there exists an $n_0\in \N$ such that the following holds for all $n\geq n_0$. If $G$ is a $K_r$-divisible graph on $(V_1,\ldots, V_r)$, where $|V_1|=\ldots=|V_r|=n$ and $\hat{\delta}(G)\geq (1-1/(r+1))n$, then $G$ has a fractional $K_r$-decomposition.
\end{conjecture}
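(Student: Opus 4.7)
The plan is to refine the weighting-and-correction framework used for Theorem~\ref{fracdecomp}, combined with a stability argument against the extremal construction from~\cite{BKLOT}. As before, I would begin by assigning a uniform weight to every copy of $K_r$ in $G$, so that the weight carried by each edge is close to~$1$, and then correct the resulting deviations while keeping the weight function non-negative. The correction step in the proof of Theorem~\ref{fracdecomp} proceeds by swapping weight between copies of $K_r$ that share suitable common cliques, and requires that every pair of edges involved lies in many joint $K_r$-copies. Near the conjectured threshold $\hat{\delta}(G) \geq (1-1/(r+1))n$, however, there will in general be edges lying in comparatively few copies of $K_r$, so the gadgets from Theorem~\ref{fracdecomp} will not apply directly.

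To handle the near-extremal regime I would prove a stability statement: any $G$ satisfying the hypotheses of the conjecture that is structurally close, in edit distance, to the extremal graph exhibited in~\cite[Section~3.1]{BKLOT} should admit an explicit fractional $K_r$-decomposition obtained from the product-like structure of that graph, using the $K_r$-divisibility hypothesis to line up the weights on the residual edges. For $G$ far from the extremal graph, a supersaturation argument should give that every edge lies in many copies of $K_r$ sharing plenty of common neighbours in every class; a suitable strengthening of the correction step from the current paper could then push through at the weaker minimum degree, since the extra slack gained by being far from the extremal example compensates for the loss in the global degree bound.

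The main obstacle will be the stability step. LP duality provides a clean dual reformulation — a counterexample $G$ would admit an edge weighting $y : E(G) \to \R$ satisfying $\sum_{e \in K} y_e \geq 0$ for every copy $K$ of $K_r$ in $G$ but $\sum_{e \in E(G)} y_e < 0$ — and one would like to derive a contradiction from the minimum degree and divisibility assumptions by averaging the local inequalities over vertices or over suitable substructures. However, since the construction in~\cite{BKLOT} shows that the conjectured bound is tight, any such averaging argument must fail precisely at the threshold. This forces a delicate analysis of near-extremal $K_r$-divisible balanced $r$-partite graphs, in which one must exploit the rigidity imposed by the equal-degree-into-each-class condition to rule out bad dual witnesses. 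I expect this stability analysis, rather than the correction scheme itself, to be where the main new work must go.
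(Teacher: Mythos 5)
This is Conjecture~\ref{conj}, which the paper explicitly poses as an \emph{open problem}; there is no proof of it in the paper to compare against. The paper's actual result, Theorem~\ref{fracdecomp}, reaches only $\hat{\delta}(G)\geq (1-1/10^6r^3)n$, and the discussion in Section~\ref{sec:improve} makes clear why: the gadgets move only a $\Theta(1/r^2)$ proportion of edge-weight, whereas the conjectured threshold would require tolerating initial deviations of order $\Theta(1/r)$. The authors state outright that it seems ``very unlikely'' that gadget-based corrections could be pushed to the true $r$-dependence, ``let alone that Conjecture~\ref{conj} could be proved.'' Your proposal begins from exactly this framework and does not confront that fundamental inefficiency, so it would need a qualitatively new idea, not a refinement.

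More concretely, your text is a research sketch rather than a proof: none of its three components is carried out. The stability statement near the extremal construction of~\cite{BKLOT} is not formulated, let alone proved, and you yourself flag it as ``where the main new work must go.'' The ``supersaturation'' claim in the far-from-extremal regime is asserted without a precise statement or bound, and it is not obvious that being far from the extremal example in edit distance forces every edge to be in enough well-spread $K_r$-copies at minimum degree $(1-1/(r+1))n$ --- at that density some vertices can have large non-neighbourhoods in several classes simultaneously, which is exactly where the current gadgets break down. Finally, the LP-duality reformulation is standard and correct as a framing, but the ``averaging argument'' it would need is not identified, and since the bound is tight such an argument must degenerate exactly at the threshold, which is the whole difficulty. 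So there is a genuine gap: what you have is a plausible but entirely unexecuted plan, and the paper gives concrete reasons to doubt that the plan's starting point (gadget corrections) can be made to work at the conjectured threshold.
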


\noindent
The results of Barber, K\"uhn, Lo, Osthus and Taylor~\cite{BKLOT} are sufficiently strong that a proof of Conjecture~\ref{conj} would be enough to show that, for each $\e>0$, any sufficiently large balanced $K_r$-divisible $r$-partite graph $G$ with $rn$ vertices and $\hat{\delta}(G)\geq (1-1/(r+1)+\e)n$ has a $K_r$-decomposition.

After detailing some of the notation we will use, in Section~\ref{sec:sketch} we sketch the main details of our proof before giving an overview of the rest of the paper.

\subsection{Notation}
We work with an $r$-partite graph~$G$, on the partition $(V_1,\ldots,V_r)$, where the sets $V_1,\ldots,V_r$ form a partition of $V(G)$ and there are no edges between any two vertices from the same set $V_i$, $i\in[r]=\{1,\ldots,r\}$. We denote by~$K_r$ the complete graph, or \emph{clique}, with $r$ vertices. We refer to the copies of $K_r$ in the graph~$G$ as the \emph{$r$-cliques} in $G$.

For a vertex $x\in V(G)$, we let $N(x)=\{y\in V(G):xy\in E(G)\}$ and $N^c(x)=V(G)\setminus N(x)$.
For a vertex set $X\subset V(G)$ in a graph $G$, $d(v,X)$ is the number of neighbours of $v$ in $X$. In an $r$-partite graph $G$ on $(V_1,\ldots,V_r)$, we let $\hat{\delta}(G)=\min\{d(v,V_j):j\in [r], v\in V(G)\setminus V_j\}$. Note that the value of $\hat{\delta}(G)$ depends on the partition of $G$, and therefore when we use it without defining a partition $(V_1,\ldots, V_r)$ we do so implicitly.
For a vertex set $X\subset V(G)$, we denote the graph induced on~$G$ by~$X$ as~$G[X]$.
By a \emph{weighting of the $r$-cliques in $G$}, we mean a function $\omega:\KK_r\to[0,1]$, where $\KK_r$ is the set of $r$-cliques in $G$, and we say the resulting \emph{weight on an edge $e\in E(G)$} is $\sum_{K\in \KK_r:e\in E(K)}\omega(K)$.

Given functions $f,g:\N\to[0,\infty)$, we say $f(n)=O(g(n))$ if there exists a constant $C>0$ such that $f(n)\leq Cg(n)$ for all $n\in \N$, and we say $f(n)=\Theta(g(n))$ if there exist constants $c,C>0$ such that $cg(n)\leq f(n)\leq Cg(n)$ for all $n\in \N$.
Finally, given any event $A$, we let
\[
\mathbf{1}_A=\left\{\begin{array}{ll}1 &\text{ if $A$ occurs} \\ 0  &\text{ if $A$ does not occur.} \end{array}\right.
\]

\section{Proof Sketch}\label{sec:sketch}
At the highest level, our proof follows the method introduced by Dross~\cite{Dross} and developed by Barber, K\"uhn, Lo, Osthus and the author~\cite{BKLMO}. We seek a fractional $K_r$-decomposition of a balanced $r$-partite $K_r$-divisible graph $G$ in which $\hat{\delta}(G)$ is close to the number of vertices in each class. We begin by uniformly weighting each copy of $K_r$, or $r$-clique, in $G$, so that the weight on the individual edges (taken as the sum of the weights of the $r$-cliques containing that edge) is on average 1. Due to the minimum degree condition, each edge will be in roughly the same number of $r$-cliques, so the weight on each edge will be close to 1 (as proved in Lemma~\ref{cliqonedge}). We aim to correct the weight on each individual edge to 1, by making small adjustments to the weight of the $r$-cliques. Each $r$-clique initially has a strictly positive weight, so these adjustments may be positive or negative, as long as the total adjustment to the weight of any $r$-clique is not too large.

We will break down the corrections needed to the weight on the edges into a sequence of smaller corrections which alter the weight on small groups of edges. We will make these smaller corrections using functions we call~\emph{gadgets}.
We call any function $f:\mathcal{K}_r\to \mathbb{R}$ a~\emph{gadget}, where $\KK_r$ is the set of $r$-cliques in $G$. Adding a gadget~$f$ to a weighting of the $r$-cliques will adjust the weight on each edge $e\in E(G)$ by $\xi_e:=\sum_{K\in \KK_r:e\in E(K)}f(K)$. For each $i\in [r]$, $v\in V_i$ and $j\in [r]\setminus \{i\}$, we have
\[
\sum_{u\in N(v)\cap V_j}\xi_{uv}=\sum_{u\in N(v)\cap V_j}\sum_{K\in \KK_r:uv\in E(K)}f(K)=\sum_{K\in \KK_r:v\in V(K)}f(K),
\]
as each $r$-clique contains exactly one vertex from each class. Therefore, the sum $\sum_{u\in N(v)\cap V_j}\xi_{uv}$ does not depend on $j$. Considering this, we can see that we cannot have a gadget that changes only the weight on one edge without altering the weight on some other edges. In fact, we will use two different gadgets which alter the weight on different collections of edges.

Our first gadget works with distinct vertices $v$ and $v'$ in some class $V_i$ and vertices $u_j\in V_j\cap N(v)\cap N(v')$, $j\in [r]\setminus \{i\}$; the gadget adds weight $w$ to (the weight on) each edge $vu_j$ and removes weight $w$ from each edge $v'u_j$ (see Figure~\ref{gadget1}). Our second gadget works with distinct vertices $v$ and $v'$ in some class $V_i$ and distinct vertices $u_1$ and $u_2$ in some other class $V_j$ which are both neighbours of $v$ and $v'$; the gadget removes weight $w$ from  $vu_2$ and $v'u_1$ and adds weight $w$ to $vu_1$ and $v'u_2$ (see Figure~\ref{gadget2}). Using these two gadgets, for any vertex $v$ we will be able to correct the weight on the edges incident to $v$, but in doing so we will also alter the weight on some edges incident to some other vertices in the same class as $v$ (which take the role of $v'$ in the gadgets). We think of this as moving the corrections we need to make to other edges. There will not typically be one vertex $v'$ to whose incident edges we can move all the corrections from the edges incident to $v$, as not all the neighbours of $v$ will be neighbours of some vertex $v'$.

\begin{figure}[b!]
\centering

\vspace{1cm}

\begin{tikzpicture}[scale=1]

\foreach \x in {1,...,4}
{
\draw[densely dashed,gray] ({2*\x},1.4) -- ({2*\x},3.6);
\draw[densely dashed,gray] ({2*\x+1.5},1.4) -- ({2*\x+1.5},3.6);
\draw[densely dashed,gray] ({2*\x+1.5},3.6)
arc [start angle=-0, end angle=180,
x radius=0.75cm, y radius=0.75cm];
\draw[densely dashed,gray] ({2*\x},1.4)
arc [start angle=-180, end angle=0,
x radius=0.75cm, y radius=0.75cm];

\draw ({2*\x+0.75},{-0.1+0.15}) node {$V_\x$};
};

\draw [fill] (2.75,1.5) circle [radius=0.1];
\draw [fill] (2.75,3.5) circle [radius=0.1];
\draw [fill] (2.75,1.1) node {$v'$};
\draw [fill] (2.75,3.9) node {$v$};
\draw (3.75,2.8) node {$+$};
\draw (4.45,2.9) node {$+$};
\draw (4.9,3.3) node {$+$};
\draw (3.75,2.15) node {$-$};
\draw (4.45,2.05) node {$-$};
\draw (4.9,1.75) node {$-$};

\foreach \x in {2,...,4}
{
\draw [fill] ({2*\x+0.75},2.5) circle [radius=0.1];
\draw ({2*\x+1.1},2.45) node {$u_\x$};
\draw (2.75,1.5)--({2*\x+0.75},2.5)--(2.75,3.5);
};

\end{tikzpicture}

\caption{The first gadget increases the weight on $vu_2, \ldots, vu_r$ and decreases the weight on $v'u_2, \ldots, v'u_r$, as depicted with $r=4$.}
\label{gadget1}
\end{figure}
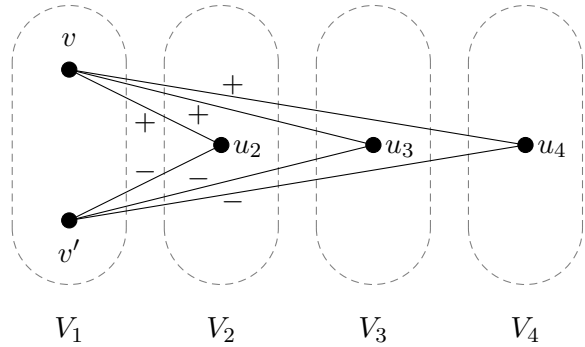

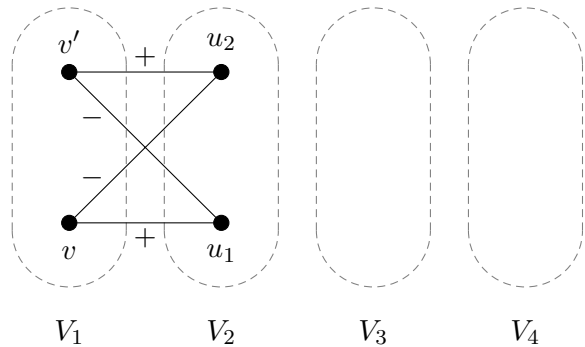
\begin{figure}[b!]
\centering

\vspace{1cm}

\begin{tikzpicture}[scale=1]

\foreach \x in {1,...,4}
{
\draw[densely dashed,gray] ({2*\x},1.4) -- ({2*\x},3.6);
\draw[densely dashed,gray] ({2*\x+1.5},1.4) -- ({2*\x+1.5},3.6);
\draw[densely dashed,gray] ({2*\x+1.5},3.6)
arc [start angle=-0, end angle=180,
x radius=0.75cm, y radius=0.75cm];
\draw[densely dashed,gray] ({2*\x},1.4)
arc [start angle=-180, end angle=0,
x radius=0.75cm, y radius=0.75cm];

\draw ({2*\x+0.75},{-0.1+0.15}) node {$V_\x$};
};

\draw [fill] (2.75,1.5) circle [radius=0.1];
\draw [fill] (2.75,3.5) circle [radius=0.1];
\draw [fill] (4.75,1.5) circle [radius=0.1];
\draw [fill] (4.75,3.5) circle [radius=0.1];
\draw [fill] (2.75,1.1) node {$v$};
\draw [fill] (2.75,3.9) node {$v'$};
\draw (4.75,3.5)--(2.75,3.5)--(4.75,1.5)--(2.75,1.5)--(4.75,3.5);
\draw [fill] (4.75,1.1) node {$u_1$};
\draw [fill] (4.75,3.9) node {$u_2$};
\draw (3.75,3.7) node {$+$};
\draw (3.05,2.9) node {$-$};
\draw (3.75,1.3) node {$+$};
\draw (3.05,2.1) node {$-$};

\end{tikzpicture}

\caption{The second gadget decreases the weight on $vu_2$ and $v'u_1$ and increases the weight on $vu_1$ and $v'u_2$, as depicted with $r=4$.}
\label{gadget2}
\end{figure}

We wish to move the required corrections to an area of the graph where they will be easier to make. Picking one specific $r$-clique $K$ from $G$, we aim to move the corrections onto the edges of $K$, where, as the sum of the corrections to be made is~$0$, they will naturally cancel out. Using our gadgets, we cannot typically move the corrections required to the edges around each vertex $v$ onto the edges incident to the vertex $v'$ in $K$ which is in the same class as $v$. Instead, where the class containing $v$ is $V_i$, we will move the corrections required to the edges incident to many different vertices in $V_i$, where the vertices in $V_i$ will be chosen to be neighbours of every vertex in $V(K)\setminus V_i$. Once we have done this for every vertex $v$ in the graph we will then use a second round of movements to move the corrections onto the edges of $K$, where they will naturally cancel out to give a fractional $K_r$-decomposition.
Finally, to ensure the above scheme does not alter the weight of one clique unduly much, we will take the average of the resulting gadget over all the $r$-cliques $K$ in the graph.

The rest of the paper is structured as follows. In Section~\ref{sec:cliqno}, we show that each edge in our graph is in roughly the same number of $r$-cliques. In Section~\ref{sec:gadgets}, we construct our two gadgets and break down the required corrections to the weight on the edges from the initial weighting into smaller corrections that can be made using the gadgets. In Section~\ref{sec:moveweight}, we use this to move the required corrections onto the edges of an $r$-clique~$K$, before averaging the resulting gadget over each different $r$-clique $K$ to prove Theorem~\ref{fracdecomp}. In Section~\ref{sec:improve}, we discuss possible improvements to Theorem~\ref{fracdecomp}.

When discussing these techniques, we will always have in mind a $K_r$-divisible $r$-partite graph~$G$ on $(V_1,\ldots,V_r)$ with $|V_1|=\ldots =|V_r|=n$, where the $r$-cliques have been given a uniform weight which we wish to correct to a fractional $K_r$-decomposition.




\section{Numbers of cliques}\label{sec:cliqno}
We will first prove some simple results concerning the number of cliques in our $r$-partite graph $G$. We wish to consider the number of cliques with vertices in specified classes, and therefore make the following definition.

\begin{definition}
Given an $r$-partite graph $G$ on $(V_1,\ldots,V_r)$ and a set $I\subset [r]$, let $\KK_I(G)$ be the set of $|I|$-cliques in $G$ with one vertex in $V_i$ for each $i\in I$. Where~$G$ is the only graph under consideration, let $\KK_I=\KK_{I}(G)$. Let $k_I=k_{I}(G)=|\KK_{I}(G)|$.
\end{definition}

In graphs with a high minimum degree, we can show that for each $I\subset [r]$ and $i\in I$, the quantities $k_I$ and $k_{I\setminus \{i\}}$ are closely related.

\begin{proposition}\label{cliqnos}
Let $r\ge 3$ and $n\in \N$, $0\leq \d\leq 1/2r$, and let~$G$ be an $r$-partite graph on $(V_1,\ldots,V_r)$, where $|V_1|=\ldots=|V_r|=n$ and $\hat{\d}(G)\geq (1-\d)n$. Then, for each $I\subset[r]$ and $i\in I$, 
\[
k_I/n\leq k_{I\setminus\{i\}}\leq (1+2\d r)k_I/n.
\] 
\end{proposition}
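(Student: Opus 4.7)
The plan is to prove both inequalities by a standard double-counting of pairs $(K', v)$ where $K' \in \KK_{I \setminus \{i\}}$ and $v \in V_i$ extends $K'$ to a clique in $\KK_I$. Every $K \in \KK_I$ arises uniquely this way (by deleting its vertex in $V_i$), so
\[
k_I = \sum_{K' \in \KK_{I \setminus \{i\}}} \Bigl| V_i \cap \bigcap_{u \in V(K')} N(u) \Bigr|.
\]

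For the left-hand inequality $k_I/n \leq k_{I \setminus \{i\}}$, I would simply note that for each $K'$ the number of extensions is at most $|V_i| = n$, so $k_I \leq n \cdot k_{I \setminus \{i\}}$.

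For the right-hand inequality, the step I expect to do the real work is bounding the intersection from below. Each $u \in V(K')$ lies in some class $V_j$ with $j \neq i$, and by the hypothesis $\hat{\delta}(G) \geq (1-\delta)n$ it has at most $\delta n$ non-neighbours in $V_i$. A union bound over the $|I| - 1 \leq r - 1 \leq r$ vertices of $K'$ gives
\[
\Bigl| V_i \cap \bigcap_{u \in V(K')} N(u) \Bigr| \geq n - (|I|-1)\delta n \geq (1 - r\delta)n.
\]
Summing this lower bound over $K' \in \KK_{I \setminus \{i\}}$ yields $k_I \geq (1 - r\delta)n \cdot k_{I \setminus \{i\}}$, i.e.\ $k_{I \setminus \{i\}} \leq k_I / ((1 - r\delta)n)$.

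Finally I would convert the factor $1/(1-r\delta)$ into $1 + 2r\delta$: since $\delta \leq 1/(2r)$ gives $r\delta \leq 1/2$, the inequality
\[
(1 - r\delta)(1 + 2r\delta) = 1 + r\delta - 2(r\delta)^2 \geq 1
\]
holds because $r\delta \geq 2(r\delta)^2$ is equivalent to $1 \geq 2r\delta$. Rearranging gives $1/(1 - r\delta) \leq 1 + 2r\delta$ and hence $k_{I \setminus \{i\}} \leq (1 + 2\delta r) k_I / n$, completing the proof. The only even mildly delicate point is the last algebraic manipulation, and it is handled cleanly by the assumption $\delta \leq 1/(2r)$.
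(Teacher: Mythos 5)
Your proof is correct and follows essentially the same approach as the paper: count extensions of each clique in $\KK_{I\setminus\{i\}}$ to cliques in $\KK_I$ via a vertex in $V_i$, bound the number of extensions between $(1-r\delta)n$ and $n$ using the minimum degree, and then convert $1/(1-r\delta)$ into $1+2r\delta$ using $\delta\le 1/2r$.
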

\begin{proof}
For each clique $K\in \KK_{I\setminus\{i\}}$, using the minimum degree of~$G$, the number of cliques in $\KK_I$ containing $K$ is at least $n-|I\setminus\{i\}|\cdot \d n\geq (1-\d r)n$, and at most $n$. Each clique $K\in \KK_I$ contains exactly one clique in $\KK_{I\setminus\{i\}}$. Therefore,
\[
k_I/n\leq k_{I\setminus\{i\}}\leq k_I/(1-\d r)n\leq (1+2\d r)k_I/n,
\]
where the last inequality follows as $\d r\leq 1/2$.
\end{proof}

We will now show that each edge in our graph is in approximately the same number of $r$-cliques. This will imply that the initial weighting of the $r$-cliques is close to a fractional $K_r$-decomposition.

\begin{lemma}\label{cliqonedge}
Let $r\ge 3$ and $n\in \N$, $0\leq \d\leq 1/8r$, and let~$G$ be an $r$-partite graph on $(V_1,\ldots,V_r)$, where $|V_1|=\ldots=|V_r|=n$ and $\hat{\d}(G)\geq (1-\d)n$. For each $e\in E(G)$, let $z_e$ be the number of $r$-cliques in $G$ containing $e$. Then, for each $e\in E(G)$, we have
\begin{equation}\label{cms1}
\Big|z_{e}-k_{[r]}/n^2\Big|\leq 9\d r k_{[r]}/n^2.
\end{equation}
\end{lemma}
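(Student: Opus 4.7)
The plan is to fix an edge $e=uv$ with $u\in V_i$, $v\in V_j$, and interpret $z_e$ bijectively as the number of $(r-2)$-cliques $C\in\KK_{[r]\setminus\{i,j\}}$ such that every vertex of $C$ lies in $N(u)\cap N(v)$ (since adjoining $u$ and $v$ to such a $C$ gives exactly the $r$-cliques through $e$, and vice versa). In particular, $z_e$ is at most $k_{[r]\setminus\{i,j\}}$, and differs from this total only by the count of ``bad'' $(r-2)$-cliques that contain at least one vertex outside $N(u)\cap N(v)$.

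For the upper bound, I would apply Proposition~\ref{cliqnos} twice in sequence (removing $i$ then $j$ from $[r]$) to obtain
\[
z_e \;\leq\; k_{[r]\setminus\{i,j\}} \;\leq\; (1+2\delta r)\,\frac{k_{[r]\setminus\{i\}}}{n} \;\leq\; (1+2\delta r)^2\,\frac{k_{[r]}}{n^2}.
\]
Expanding $(1+2\delta r)^2 = 1 + 4\delta r + 4\delta^2 r^2$ and using $\delta r\leq 1/8$ yields $z_e - k_{[r]}/n^2 \leq 4.5\,\delta r\cdot k_{[r]}/n^2$.

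For the lower bound, I would count the bad cliques by summing over choices of the offending vertex. For each $k\in[r]\setminus\{i,j\}$, the minimum-degree hypothesis gives $|V_k\setminus (N(u)\cap N(v))|\leq 2\delta n$; moreover, for any fixed $w\in V_k$ the number of $(r-2)$-cliques in $\KK_{[r]\setminus\{i,j\}}$ through $w$ is at most $k_{[r]\setminus\{i,j,k\}}$, which by a further application of Proposition~\ref{cliqnos} is at most $(1+2\delta r)k_{[r]\setminus\{i,j\}}/n$. Summing over the $r-2$ choices of $k$, the number of bad cliques is at most
\[
(r-2)\cdot 2\delta n\cdot (1+2\delta r)\,\frac{k_{[r]\setminus\{i,j\}}}{n} \;\leq\; 2(r-2)\delta(1+2\delta r)\,k_{[r]\setminus\{i,j\}}.
\]
Hence $z_e \geq \bigl(1-2(r-2)\delta(1+2\delta r)\bigr)k_{[r]\setminus\{i,j\}} \geq \bigl(1-2r\delta(1+2\delta r)\bigr)k_{[r]}/n^2$, using $k_{[r]\setminus\{i,j\}}\geq k_{[r]}/n^2$ from Proposition~\ref{cliqnos}. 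With $\delta r\leq 1/8$ this gives $k_{[r]}/n^2 - z_e \leq 2.5\,\delta r\cdot k_{[r]}/n^2$.

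Both bounds are comfortably below the required $9\delta r\cdot k_{[r]}/n^2$, so combining them yields \eqref{cms1}. The only real content is the correct bijective reinterpretation of $z_e$ and the bookkeeping to turn ``missing vertex'' counts back into clique counts via Proposition~\ref{cliqnos}; there is no genuine obstacle, the constant $9$ is generous enough that the crude bound $(1+2\delta r)^c\leq 1+O(\delta r)$ at each step suffices.
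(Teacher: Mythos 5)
Your proposal is correct and uses essentially the same approach as the paper: reinterpret $z_e$ as counting cliques in $\KK_{[r]\setminus\{i,j\}}$ lying inside $N(u)\cap N(v)$, bound the number of ``bad'' cliques via Proposition~\ref{cliqnos}, and then relate $k_{[r]\setminus\{i,j\}}$ to $k_{[r]}/n^2$. The only cosmetic difference is that you treat the upper and lower bounds asymmetrically (getting $4.5\delta r$ and $2.5\delta r$ respectively), whereas the paper bounds $|z_e - k_{[r]\setminus\{i,j\}}|$ symmetrically by $4\delta r\, k_{[r]}/n^2$ and then adds the $|k_{[r]\setminus\{i,j\}} - k_{[r]}/n^2|\leq 5\delta r\, k_{[r]}/n^2$ term by the triangle inequality.
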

\begin{proof} Let $i,j\in I$ and $e=xy\in E(G)$ with $x\in V_i$ and $y\in V_j$ and note that $z_{xy}$ is equal to the number of cliques in $\KK_{[r]\setminus\{i,j\}}$ which lie in $N(x)\cap N(y)$. If $k\in [r]\setminus\{i,j\}$ and $z\in V_k$, then the number of cliques in $\KK_{[r]\setminus\{i,j\}}$ which contain~$z$ is at most
\[
k_{[r]\setminus\{i,j,k\}}\leq (1+2\d r)^3k_{[r]}/n^3\leq (5/4)^3k_{[r]}/n^3\leq 2k_{[r]}/n^3,
\]
where we have used Proposition~\ref{cliqnos}. Therefore,
\begin{equation}\label{cms2}
|z_{xy}-k_{[r]\setminus\{i,j\}}|\leq \sum_{k\in[r]\setminus\{i,j\}}|(N^c(x)\cup N^c(y))\cap V_k|\cdot 2k_{[r]}/n^3\leq 4\d r k_{[r]}/n^2.
\end{equation}

By Lemma~\ref{cliqnos}, $|k_{[r]\setminus\{i,j\}}-k_{[r]}/n^2|\leq ((1+2\d r)^2-1)k_{[r]}/n^2\leq 5\d rk_{[r]}/n^2$, which, together with~\eqref{cms2}, implies~\eqref{cms1}.
\end{proof}


\section{Gadgets}\label{sec:gadgets}
In this section, we will construct certain functions, called \emph{gadgets}, which alter the weight on edges in our graph by altering the weight of the $r$-cliques. We will use gadgets which do not alter the total weight on the edges, for which we use the following definition.
\begin{definition}
Given any set $A$, a function $f:A\to \R$ is a \emph{zero-sum function} if $\sum_{a\in A}f(a)=0$.
\end{definition}

Our first gadget works with the edges between two distinct vertices~$v$ and $v'$ in some class $V_i$ and some vertices $u_j\in V_j\cap N(v)\cap N(v')$, $j\in [r]\setminus \{i\}$. Adding weight~$1$ to (the weight on) each edge $vu_j$, the gadget removes weight $1$ from each edge $v'u_j$ (as depicted in Figure~\ref{gadget1}). If $u_{j_1}u_{j_2}$ was an edge for each distinct $j_1$ and $j_2$ in $[r]\setminus \{i\}$, then we could easily create such a gadget. Indeed, adding weight~$1$ to the clique with vertex set $\{v,u_j:j\in[r]\setminus\{i\}\}$, and removing weight $1$ from the clique with vertex set $\{v',u_j:j\in[r]\setminus\{i\}\}$ effects this change, as the change in the weight on each edge $u_{j_1}u_{j_2}$ is cancelled out. Let us call such a gadget a \emph{simple gadget}.

Typically, we will not have all such edges $u_{j_1}u_{j_2}$ in our graph. Instead, we will find a set of new vertices $A=\{a_j:j\in [r]\setminus \{i\}\}$ where each vertex $a_j$ is a neighbour of every other vertex $u_{j'}$, $a_{j'}$, $v$ and $v'$ except for $u_j$. For each $j\in [r]\setminus \{i\}$, let $A_j=A\setminus \{a_j\}$. For each $j\in[r]\setminus\{i\}$, as $G[\{v,u_j\}\cup A_j]$ and $G[\{v',u_j\}\cup A_j]$ are cliques, we can use a simple gadget to increase the weight on each edge between $v$ and $A_j\cup \{u_j\}$ by~$1$ and decrease the weight on each edge between $v'$ and $A_j\cup\{u_j\}$ by~$1$. In total, this increases the weight on each edge between $v$ and $\{u_j:j\in [r]\setminus \{i\}\}$ by~$1$ and decreases the weight on each edge between $v'$ and $\{u_j:j\in [r]\setminus \{i\}\}$ by~1, as required, but it also increases the weight on each edge between $v$ and $A$ by $r-2$ and decreases the weight on each edge between $v'$ and $A$ by $r-2$. However, as $G[\{v\}\cup A]$ and $G[\{v'\}\cup A]$ are both cliques we can reverse this last change using a simple gadget.

This describes the underlying method of our gadget, but changes the weight of a few cliques by a large amount, while we wish only to make a small adjustment to the weight of any clique. To avoid this, we will take an average of the above construction for our first gadget over all possible such vertex sets $A$.

\begin{lemma}\label{weightK2r} Let $r\geq 3$ and $n\ge 8r^2$. Let~$G$ be an $r$-partite graph on $(V_1,\ldots,V_r)$, where $|V_1|=\ldots=|V_r|=n$ and $\hat{\d}(G)\geq (1-1/8r^2)n$. Let $j\in [r]$, let $v,v'\in V_j$ with $v\neq v'$ and, for each $i\in [r]\setminus \{j\}$, let $u_i\in V_i\cap N(v)\cap N(v')$.

Then, there is a zero-sum function $\psi:\KK_{[r]}\to \R$ so that the following hold.
\begin{enumerate}[label =(\roman{enumi})]
\item For each $e\in E(G)$,
\begin{equation*}\label{sunday}
\sum_{K\in \KK_{[r]}:e\in E(K)}\psi(K)= \left\{
\begin{array}{ll}
1 & \text{ if }e=vu_i\text{ for some }i\in[r]\setminus\{j\} 
\\
-1 &\text{ if }e=v'u_i\text{ for some }i\in[r]\setminus\{j\} 
\\
0 & \text{ otherwise.}
\end{array}
\right.
\end{equation*}

\item For each $K\in \KK_{[r]}$, letting $V=\{u_i:i\in [r]\setminus\{j\}\}$, we have
\begin{equation*}\label{sundayb}
|\psi(K)|\leq \left\{
\begin{array}{ll}
2n^2/k_{[r]} & \text{ if }|V(K)\cap V|=1\text{ and }|V(K)\cap\{v,v'\}|=1 
\\
2rn/k_{[r]} &\text{ if }|V(K)\cap V|=0\text{ and }|V(K)\cap\{v,v'\}|=1
\\
0 & \text{ otherwise.}
\end{array}
\right.
\end{equation*}
\end{enumerate}
\end{lemma}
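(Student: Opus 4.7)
The plan is to build $\psi$ as an average of base gadgets $\psi_A$ over a large family $\mathcal{A}$ of auxiliary $(r-1)$-tuples, following the averaging approach described in the paragraph preceding the statement. Writing $V=\{u_i:i\in[r]\setminus\{j\}\}$, let $\mathcal{A}$ be the set of tuples $A=(a_i)_{i\in[r]\setminus\{j\}}$ such that $a_i\in V_i\setminus\{u_i\}$, $a_i\in N(v)\cap N(v')$, $a_i\in N(u_{i'})$ for every $i'\in[r]\setminus\{i,j\}$, and $\{a_i:i\in[r]\setminus\{j\}\}$ induces an $(r-1)$-clique in $G$. The restriction $a_i\neq u_i$ looks harmless but will be crucial to the bookkeeping in (ii). Picking the coordinates $a_i$ sequentially and using $\hat{\delta}(G)\geq(1-1/8r^2)n$ with $n\geq 8r^2$, at most $(2r-2)\delta n+1$ candidates are ruled out at each step, so $|\mathcal{A}|\geq n^{r-1}/2$.

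For $A\in\mathcal{A}$ and $i\in[r]\setminus\{j\}$ write $A_i=A\setminus\{a_i\}$; the adjacency conditions imposed on $A$ make $\{v,u_i\}\cup A_i$, $\{v',u_i\}\cup A_i$, $\{v\}\cup A$, and $\{v'\}\cup A$ into $r$-cliques of $G$, so we may define
\[
\psi_A=\sum_{i\in[r]\setminus\{j\}}\bigl(\mathbf{1}_{\{v,u_i\}\cup A_i}-\mathbf{1}_{\{v',u_i\}\cup A_i}\bigr)-(r-2)\bigl(\mathbf{1}_{\{v\}\cup A}-\mathbf{1}_{\{v'\}\cup A}\bigr)
\]
and then $\psi=|\mathcal{A}|^{-1}\sum_{A\in\mathcal{A}}\psi_A$. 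Positive and negative indicators appear with matched total coefficient $r-1$, so each $\psi_A$, and hence $\psi$, is zero-sum. By linearity, to verify (i) it suffices to track the edge effect of one $\psi_A$: the $i$th summand is a \emph{simple gadget} that boosts by $+1$ every edge from $v$ into $A_i\cup\{u_i\}$ and cuts by $1$ the corresponding edges from $v'$, cancelling inside $A_i\cup\{u_i\}$. Summing over $i$, each edge $vu_i$ is hit once giving $+1$, but each edge $va_{i'}$ is hit by the $r-2$ summands with $i\neq i'$ giving $+(r-2)$; these overshoots are annihilated exactly by the final $\mp(r-2)$ on $\{v\}\cup A$ and $\{v'\}\cup A$, leaving the effect promised by (i).

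The case analysis for (ii) is the most delicate step, and it is here that the restriction $a_i\neq u_i$ earns its keep. If $\{v,v'\}\cap V(K)=\emptyset$ then no indicator in any $\psi_A$ is supported at $K$, so $\psi(K)=0$. Otherwise, by symmetry assume $v\in V(K)$, and let $t=|V(K)\cap V|$. For $t\geq 2$, any $K$ of the form $\{v,u_k\}\cup A_k$ or $\{v\}\cup A$ contributing to $\psi_A(K)$ would force some coordinate $a_{k'}$ to equal $u_{k'}$, which $\mathcal{A}$ forbids, so $\psi(K)=0$, delivering the ``otherwise'' case. For $t=1$, write $V(K)\cap V=\{u_i\}$; the only $r$-cliques in the support of any $\psi_A$ that can equal $K$ are $\{v,u_i\}\cup A_i$ with $A_i=V(K)\setminus\{v,u_i\}$ pinned down and $a_i$ free, contributing $+1$ for each of at most $n$ valid choices, so $|\psi(K)|\leq n/|\mathcal{A}|\leq 2/n^{r-2}\leq 2n^2/k_{[r]}$, using $k_{[r]}\leq n^r$. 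For $t=0$, only $\{v\}\cup A$ with $A=V(K)\setminus\{v\}$ can equal $K$, contributing $-(r-2)$ at most once, so $|\psi(K)|\leq (r-2)/|\mathcal{A}|\leq 2r/n^{r-1}\leq 2rn/k_{[r]}$. The case $v'\in V(K)$ is symmetric.
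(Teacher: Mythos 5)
Your proof follows the paper's own approach---averaging simple gadgets over a family of auxiliary $(r-1)$-tuples $A$ that are common neighbours of $v,v'$ and of every $u_{i'}$ except the one in their own class---and it is correct. The one noteworthy difference is the explicit constraint $a_i\neq u_i$ in your $\mathcal{A}$, which the paper's $\HH$ does not impose. This is more than cosmetic: the paper defines $\phi(K)$ structurally, by $|V(K)\cap V|$ and $|V(K)\cap\{v,v'\}|$ alone, rather than by $K$'s role in a given gadget, and when some $a_i=u_i$ (which can occur whenever $u_i$ happens to be adjacent to every other $u_{i'}$) the paper's per-$A$ identity $\sum_{K\subset G[A\cup V\cup\{v,v'\}]:e\in E(K)}\phi(K)=I(e)$ actually fails on edges of the form $va_{i'}$ and $vu_{i'}$, because $G[A\cup\{v\}]$ then has $V(K)\cap V\neq\emptyset$ and is assigned $\phi$-value $1$ rather than $-(r-2)$. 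Your hard-coded indicator form of $\psi_A$ gives the correct edge effect mechanically regardless, and the extra restriction keeps the support count in (ii) clean; in effect you have tightened a small gap in the published argument. (Minor quibble: the ``matched total coefficient $r-1$'' in your zero-sum remark should read $2r-3$, but the conclusion stands.)
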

\begin{proof} Suppose, without loss of generality, that $j=r$, and let $V=\{u_1,\ldots,u_{r-1}\}$.
Let $\HH$ be the set of sets $A=\{a_1,\ldots,a_{r-1}\}$ with $a_i\in V_i$, $a_i\in \cap_{i'\in [r-1]\setminus \{i\}}N(u_{i'})$, and $a_i\in N(v)\cap N(v')$, for each $i\in[r-1]$, and $G[A]\in \KK_{[r-1]}$. 

For each $K\in \KK_{[r-1]}$, $V(K)\notin \HH$ if and only if $V(K)$ intersects with $N^c(u_i)\setminus V_i$, for some $i\in[r-1]$, or $N^c(v)\setminus V_r$, or $N^c(v')\setminus V_r$. For each $j\in [r-1]$ and $z\in V_j$, there are at most $k_{[r-1]\setminus \{j\}}\leq 2k_{[r-1]}/n$ cliques $K\in \KK_{[r-1]}$ containing $z$, where we have used Proposition~\ref{cliqnos}. As $\hat{\d}(G)\geq (1-1/8r^2)n$, for each $j\in[r]$ and $z\in V_j$ we have $|N^c(z)\setminus V_j|\leq (r-1)n/8r^2$, and hence
\begin{align}
|\HH|&\geq k_{[r-1]}-|(\cup_{i\in[r-1]}N^c(u_i)\setminus V_i)\cup (N^c(v)\setminus V_r)\cup (N^c(v')\setminus V_r)|\cdot 2k_{[r-1]}/n
\nonumber
\\
&\geq k_{[r-1]}-(r+1)\cdot(r-1)n/8r^2\cdot 2k_{[r-1]}/n\geq k_{[r-1]}/2\geq k_{[r]}/2n,\label{frost}
\end{align}
where we have used Proposition~\ref{cliqnos}.

For each clique $K\in \KK_{[r]}$, let $\a_K$ be the number of sets $A\in \HH$ for which $K\subset G[A\cup V\cup \{v,v'\}]$.
For each clique $K\in \KK_{[r]}$, let
\begin{equation}\label{salmon}
\phi(K)=\left\{\begin{array}{ll}
1 & \text{ if }|V(K)\cap V|=1\text{ and }v\in V(K)\\
-1 & \text{ if }|V(K)\cap V|=1\text{ and }v'\in V(K)\\
-(r-2) & \text{ if }V(K)\cap V=\emptyset\text{ and }v\in V(K)\\
r-2 & \text{ if }V(K)\cap V=\emptyset\text{ and }v'\in V(K)\\
0 & \text{ otherwise,}
\end{array}
\right.
\end{equation}
and let $\psi(K)=\a_K\phi(K)/|\HH|$. Note that for each clique $K\in \KK_{[r]}$ we cannot have both $v\in V(K)$ and $v'\in V(K)$, and therefore $\phi$ is well-defined. We will show that~$\psi$ satisfies the requirements of the lemma.

Firstly, let $A\in \HH$. For each clique $K\in \KK_{[r]}$ with $K\subset G[A\cup V\cup \{v\}]$ and $v\in V(K)$, if we switch $v$ for $v'$ then we alter only the sign of $\phi(K)$. That is, $\phi(G[(V(K)\setminus\{v\})\cup \{v'\}])=-\phi(K)$. Thus, as $\phi(K)=0$ if $V(K)\cap \{v,v'\}=\emptyset$, if $e\in E(G)$ with $V(e)\cap \{v,v'\}=\emptyset$, then
\begin{align*}
\sum_{K\subset G[A\cup V\cup \{v,v'\}]:e\in E(K)}&\phi(K)
\\
&\hspace{-1cm}=\sum_{K\subset G[A\cup V\cup \{v\}]:e\in E(K)}\phi(K)+\sum_{K\subset G[A\cup V\cup \{v'\}]:e\in E(K)}\phi(K)=0.
\end{align*}
If $e=vu_i$ with $i\in[r-1]$, then the only $K\in \KK_{[r]}$ with $K\subset G[A\cup V\cup \{v,v'\}]$, $e\in E(K)$ and $\phi(K)\neq 0$ is $G[(A\setminus \{a_i\})\cup V(e)]$. Therefore,
\[
\sum_{K\subset G[A\cup V\cup \{v,v'\}]:e\in E(K)}\phi(K)=1.
\]
Similarly, if $e=v'u_i$ with $i\in[r-1]$, then $\sum_{K\subset G[A\cup V\cup \{v,v'\}]:e\in E(K)}\phi(K)=-1$.

If $e=va_i$ with $i\in[r-1]$, then the only $K\in \KK_{[r]}$ with $K\subset G[A\cup V\cup \{v,v'\}]$, $e\in E(K)$ and $\phi(K)\neq 0$ are $G[A\cup\{v\}]$ and the cliques $G[(A\setminus \{a_{i'}\})\cup \{u_{i'},v\}]$ with $i'\in [r-1]\setminus\{i\}$, so that
\[
\sum_{K\subset G[A\cup V\cup \{v,v'\}]:e\in E(K)}\phi(K)=- (r-2)+(r-2)\cdot 1=0.
\]
Similarly, if $e=v'a_i$ with $i\in[r-1]$ then $\sum_{K\subset G[A\cup V\cup \{v,v'\}]:e\in E(K)}\phi(K)=0$. If $e\in E(G)$ with $V(e)\not\subset A\cup V\cup \{v,v'\}$ then there are no cliques $K\in \KK_{[r]}$ with $K\subset G[A\cup V\cup\{v,v'\}]$ and $e\in E(K)$.
Therefore, if for each $e\in E(G)$ we set
\[
I(e)=\left\{
\begin{array}{ll}
1 & \text{ if }e=vu_i\text{ for some }i\in[r-1],
\\
-1 & \text{ if }e=v'u_i\text{ for some }i\in[r-1],
\\
0 & \text{ otherwise,}
\\
\end{array}
\right.
\]
then for each $A\in \HH$ and $e\in E(G)$ we have
\[
\sum_{K\subset G[A\cup V\cup \{v,v'\}]:e\in E(K)}\phi(K)=I(e).
\]

Thus, for each $e\in E(G)$,
\begin{align}
\sum_{K\in \KK_{[r]}:e\in E(K)}\psi(K)&=\frac{1}{|\HH|}\sum_{K\in \KK_{[r]}:e\in E(K)}\sum_{A\in\HH:K\subset G[A\cup V\cup \{v,v'\}]}\phi(K)\nonumber
\\
&=\frac{1}{|\HH|}\sum_{A\in \HH}\sum_{K\subset G[A\cup V\cup \{v,v'\}]:e\in E(K)}\phi(K)\nonumber
\\
&=\frac{1}{|\HH|}\sum_{A\in \HH}I(e)=I(e),\label{cms3}
\end{align}
and therefore (i) holds. Note furthermore that~\eqref{cms3} implies that
\[
\binom{r}{2}\sum_{K\in \KK_{[r]}}\psi(K)=\sum_{e\in E(G)}\sum_{K\in \KK_{[r]}:e\in E(K)}\psi(K)=\sum_{e\in E(G)}I(e)=0,
\]
and thus $\psi$ is a zero-sum function.

Secondly, note that for each $K\in \KK_{[r]}$, we have by~\eqref{salmon} that
\begin{equation}\label{sunday2}
|\phi(K)|\leq \left\{
\begin{array}{ll}
1 & \text{ if }|V(K)\cap V|=1\text{ and }|V(K)\cap\{v,v'\}|=1
\\
r &\text{ if }|V(K)\cap V|=0\text{ and }|V(K)\cap\{v,v'\}|=1
\\
0 & \text{ otherwise.}
\end{array}
\right.
\end{equation}
If $K\in \KK_{[r]}$, $|V(K)\cap V|=1$ and $|V(K)\cap\{v,v'\}|=1$, then for each set $A\in \HH$ with $V(K)\subset A\cup V\cup\{v,v'\}$ we have $V(K)\setminus(V\cup \{v,v'\})\subset A$, $|V(K)\setminus(V\cup \{v,v'\})|=r-2$ and $G[A]\in \KK_{[r-1]}$; thus $\a_K\leq n$. If $K\in \KK_{[r]}$, $|V(K)\cap V|=0$ and $|V(K)\cap\{v,v'\}|=1$, then there is at most one set $A\in \HH$ with $V(K)\subset A\cup V\cup\{v,v'\}$, namely $A=V(K)\setminus \{v,v'\}$ if $A\subset\cap_{i\in[r-1]}(N(u_i)\cup V_i)$ and $A\subset N(v)\cap N(v')$; thus $\a_K\leq 1$. Together with~\eqref{sunday2},~\eqref{frost} and the definition of $\psi$, this gives (ii).
\end{proof}

Our second gadget works with distinct vertices~$v$ and~$v'$ in some class $V_i$ and distinct vertices $u_1$ and $u_2$ in some other class $V_j$ which are neighbours of both $v$ and~$v'$. The gadget removes weight $1$ from $vu_2$ and $v'u_1$ and adds weight $1$ to $vu_1$ and $v'u_2$ (see Figure~\ref{gadget2}). Simpler than the construction for the first gadget, for the construction of the second gadget we first find a vertex set $A=\{a_{i'}\in V_i:i'\in[r]\setminus\{i,j\}\}$ of neighbours of $v$, $v'$, $u_1$ and $u_2$, so that $G[A]$ is a clique. Adding weight~$1$ to the cliques $G[\{v,u_1\}\cup A]$ and $G[\{v',u_2\}\cup A]$ and removing weight~$1$ from the cliques $G[\{v,u_2\}\cup A]$ and $G[\{v',u_1\}\cup A]$ produces the required change. Similarly as in our construction of the first gadget, we wish to avoid making large adjustments to the weight of any clique. Therefore, we will take our second gadget to be the average of this construction over all the different possible such vertex sets~$A$.

\begin{lemma}\label{weight4cycle} Let $r\geq 3$ and $n\ge 16r$. Let~$G$ be an $r$-partite graph on $(V_1,\ldots,V_r)$, where $|V_1|=\ldots=|V_r|=n$ and $\hat{\d}(G)\geq (1-1/16r)n$. Let $i,j\in [r]$ and let $v,v'\in V_i$ and $u_1,u_2\in V_j$ be distinct vertices with $vu_1,vu_2,v'u_1,v'u_2\in E(G)$.

Then, there is a zero-sum function $\psi:\KK_{[r]}\to \R$ so that the following hold.
\begin{enumerate}[label =(\roman{enumi})]
\item For each $e\in E(G)$,
\begin{equation*}
\sum_{K\in \KK_{[r]}:e\in E(K)}\psi(K)= \left\{
\begin{array}{ll}
1 & \text{ if }e=vu_1\text{ or }v'u_2
\\
-1 &\text{ if }e=vu_2\text{ or }v'u_1
\\
0 & \text{ otherwise.}
\end{array}
\right.
\end{equation*}

\item For each $K\in \KK_{[r]}$, letting $V=\{v,v',u_1,u_2\}$, we have
\begin{equation*}\label{sundayc}
|\psi(K)|\leq \left\{
\begin{array}{ll}
2n^2/k_{[r]} & \text{ if }|V(K)\cap V|=2
\\
0 & \text{ otherwise.}
\end{array}
\right.
\end{equation*}
\end{enumerate}
\end{lemma}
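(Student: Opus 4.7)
The plan is to mirror the construction of Lemma~\ref{weightK2r}, but using a simpler template since no extra ``correction'' of the form $G[\{v\}\cup A]$ is needed. Write $V=\{v,v',u_1,u_2\}$. For a fixed $(r-2)$-clique $A=\{a_{i'}:i'\in[r]\setminus\{i,j\}\}$ whose every vertex is a neighbour of each of $v,v',u_1,u_2$, one can build a simple four-term gadget supported on $G[A\cup V]$ by placing $+1$ on the $r$-cliques $G[A\cup\{v,u_1\}]$ and $G[A\cup\{v',u_2\}]$ and $-1$ on $G[A\cup\{v,u_2\}]$ and $G[A\cup\{v',u_1\}]$. A direct check on each edge inside $G[A\cup V]$ shows that the four edges $vu_1,v'u_2$ (resp.\ $vu_2,v'u_1$) receive net weight $+1$ (resp.\ $-1$), while each edge inside $A$ receives $+1+1-1-1=0$ and each edge $xa$ with $x\in V$, $a\in A$ receives $+1-1=0$; all other edges are untouched. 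To keep each clique weight small we will average this template over all admissible $A$.

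Concretely, let $\HH$ be the family of $(r-2)$-sets $A=\{a_{i'}\in V_{i'}:i'\in[r]\setminus\{i,j\}\}$ such that $G[A]\in\KK_{[r]\setminus\{i,j\}}$ and $A\subset N(v)\cap N(v')\cap N(u_1)\cap N(u_2)$. Following the counting in~\eqref{frost}, each vertex of $V$ has at most $(r-1)n/(16r)$ non-neighbours outside its own class, while each vertex $z$ lies in at most $k_{[r]\setminus\{i,j,i'\}}\leq 2k_{[r]\setminus\{i,j\}}/n$ cliques of $\KK_{[r]\setminus\{i,j\}}$ by Proposition~\ref{cliqnos}. Hence the number of ``bad'' $(r-2)$-cliques is at most $4\cdot (r-1)n/(16r)\cdot 2k_{[r]\setminus\{i,j\}}/n\leq k_{[r]\setminus\{i,j\}}/2$, so $|\HH|\geq k_{[r]\setminus\{i,j\}}/2\geq k_{[r]}/(2n^2)$, again by Proposition~\ref{cliqnos}.

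For each $K\in\KK_{[r]}$ define
\[
\phi(K)=\begin{cases} +1 & \text{if } \{v,u_1\}\subset V(K)\text{ or }\{v',u_2\}\subset V(K),\\ -1 & \text{if } \{v,u_2\}\subset V(K)\text{ or }\{v',u_1\}\subset V(K),\\ 0 & \text{otherwise.}\end{cases}
\]
Since each $r$-clique meets $V_i$ and $V_j$ in exactly one vertex each, $\phi$ is well-defined (the four cases are mutually exclusive and correspond exactly to $|V(K)\cap V|=2$). Let $\alpha_K=|\{A\in\HH:V(K)\subset A\cup V\}|$ and put $\psi(K)=\alpha_K\phi(K)/|\HH|$. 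The per-$A$ verification above gives, for every $e\in E(G)$ and $A\in\HH$,
\[
\sum_{K\subset G[A\cup V]:\,e\in E(K)}\phi(K)=J(e),
\]
where $J(e)$ is the target value from (i). Swapping the order of summation yields (i) immediately, and $\binom{r}{2}\sum_K\psi(K)=\sum_e J(e)=0$, so $\psi$ is zero-sum.

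Finally, for (ii): if $|V(K)\cap V|\ne 2$ then $\phi(K)=0$ and $\psi(K)=0$. If $|V(K)\cap V|=2$, then $V(K)\setminus V$ has exactly $r-2$ vertices, one in each class of $[r]\setminus\{i,j\}$, so any $A\in\HH$ with $V(K)\subset A\cup V$ must equal $V(K)\setminus V$, giving $\alpha_K\leq 1$. Combining with $|\phi(K)|\leq 1$ and the lower bound on $|\HH|$ yields $|\psi(K)|\leq 2n^2/k_{[r]}$, which is (ii). The only mildly delicate step is the lower bound on $|\HH|$, exactly as in Lemma~\ref{weightK2r}; the rest is bookkeeping on the simple gadget.
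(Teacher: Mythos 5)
Your proposal is correct and follows the paper's proof essentially verbatim: same family $\HH$ of admissible $(r-2)$-cliques, same lower bound $|\HH|\geq k_{[r]}/2n^2$ via Proposition~\ref{cliqnos}, same four-term template $\phi$, same averaging $\psi(K)=\alpha_K\phi(K)/|\HH|$, and the same per-$A$ edge-by-edge verification followed by the $\alpha_K\leq 1$ observation. The only differences are notational (you keep generic $i,j$ rather than relabelling to $r-1,r$, and you phrase $\phi$ in terms of $V(K)$ rather than $E(K)$).
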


\begin{proof}
Suppose, without loss of generality, that $i=r-1$ and $j=r$, and let $V=\{v,v',u_1,u_2\}$.
Let $\HH$ be the set of sets $A=\{a_1,\ldots,a_{r-2}\}$ with $a_i\in V_i$ and $a_i\in N(v)\cap N(v')\cap N(u_1)\cap N(u_2)$, for each $i\in[r-2]$, and $G[A]\in \KK_{[r-2]}$.

For each $K\in \KK_{[r-2]}$, $V(K)\notin \HH$ if and only if $V(K)$ intersects with $N^c(v)\cup N^c(v')\cup N^c(u_1)\cup N^c(u_2)$. For each $j\in [r-2]$ and $z\in V_j$, there are at most $k_{[r-2]\setminus \{j\}}\leq 2k_{[r-2]}/n$ cliques $K\in \KK_{[r-2]}$ containing $z$, where we have used Proposition~\ref{cliqnos}. Therefore, as $\hat{\d}(G)\geq (1-1/16r)n$,
\begin{align}
|\HH|&\geq k_{[r-2]}-|(N^c(v)\cup N^c(v')\cup N^c(u_1)\cup N^c(u_2))\setminus (V_{j-1}\cup V_j)|\cdot 2k_{[r-2]}/n
\nonumber
\\
&\geq k_{[r-2]}-4r\cdot n/16r\cdot 2k_{[r-2]}/n= k_{[r-2]}/2\geq k_{[r]}/2n^2,\label{frost2}
\end{align}
where we have again used Proposition~\ref{cliqnos}.

For each $K\in \KK_{[r]}$, let $\a_K$ be the number of sets $A\in \HH$ for which $K\subset G[A\cup V]$, let
\begin{equation}\label{rawfish}
\phi(K)=\left\{\begin{array}{ll}
1 & \text{ if }vu_1\text{ or }v'u_2\in E(K)\\
-1 & \text{ if }vu_2\text{ or }v'u_1\in E(K)\\
0 & \text{ otherwise,}
\end{array}
\right.
\end{equation}
and let $\psi(K)=\a_K\phi(K)/|\HH|$. We will show that $\psi$ satisfies the requirements of the lemma.

Firstly, let $A\in \HH$ and $e\in E(G)$ with $V(e)\subset A\cup V$. There are 4 cliques $K\in \KK_{[r]}$ for which $\phi(K)\neq 0$ and $K\subset G[A\cup V]$, namely $G[A\cup \{v,u_1\}]$, $G[A\cup \{v,u_2\}]$, $G[A\cup \{v',u_1\}]$ and $G[A\cup \{v',u_2\}]$. 

If $V(e)\subset A$, then $e$ is contained in each of these cliques, and hence we have that $\sum_{K\subset G[A\cup V]:e\in E(K)}\phi(K)=0$. If $|V(e)\cap V|=1$, then $e$ is contained in exactly two of these cliques -- the two cliques containing the single vertex in $V(e)\cap V$ -- and in each possibility we can see that $\sum_{K\subset G[A\cup V]:e\in E(K)}\phi(K)=0$.  If $V(e)\not\subset A\cup V$ then none of these cliques contain $e$.
If $V(e)\subset V$, then $e$ is contained in only one of these cliques and checking the possibilities we can see that 
\[
\sum_{K\subset G[A\cup V]:e\in E(K)}\phi(K)=\mathbf{1}_{\{e\in \{vu_1,v'u_2\}\}}-\mathbf{1}_{\{e\in \{vu_2,v'u_1\}\}},
\]
where from the reasoning above, this equation also holds for all other edges $e\in E(G)$.
Thus, for each $e\in E(G)$,
\begin{align}
\sum_{K\in \KK_{[r]}:e\in E(K)}\psi(K)&=\frac{1}{|\HH|}\sum_{K\in \KK_{[r]}:e\in E(K)}\sum_{A\in\HH:K\subset G[A\cup V]}\phi(K)\nonumber
\\
&=\frac{1}{|\HH|}\sum_{A\in \HH}\sum_{K\subset G[A\cup V]:e\in E(K)}\phi(K)\nonumber
\\
&=\frac{1}{|\HH|}\sum_{A\in \HH}\left(\mathbf{1}_{\{e\in \{vu_1,v'u_2\}\}}-\mathbf{1}_{\{e\in \{vu_2,v'u_1\}\}}\right)\nonumber
\\
&=\mathbf{1}_{\{e\in \{vu_1,v'u_2\}\}}-\mathbf{1}_{\{e\in \{vu_2,v'u_1\}\}},\label{final0}
\end{align}
and therefore (i) holds. Note furthermore that
\begin{align*}
\binom{r}{2}\sum_{K\in \KK_{[r]}}\psi(K)&=\sum_{e\in E(G)}\sum_{K\in \KK_{[r]}:e\in E(K)}\psi(K)\overset{\eqref{final0}}{=}0,
\end{align*}
and thus $\psi$ is a zero-sum function.

Secondly, for each $K\in \KK_{[r]}$,~\eqref{rawfish} implies that $|\phi(K)|= \mathbf{1}_{\{|V(K)\cap V|=2\}}$.
If $K\in \KK_{[r]}$ and $|V(K)\cap V|=2$, then there is at most one set $A\in \HH$ with $K\subset A\cup V$, namely $V(K)\setminus V$ if $V(K)\setminus V\subset \cap_{u\in V}N(u)$; thus $\a_K\leq 1$. Therefore, as $|\HH|\geq k_{[r]}/2n^2$ by~\eqref{frost2}, if $|V(K)\cap V|=2$ then $\psi(K)\leq 2n^2/k_{[r]}$. As for each $K\in \KK_{[r]}$ with $|V(K)\cap V|\neq 2$ we have $\psi(K)=0$, this gives (ii).
\end{proof}

Finally in this section, given a set of required corrections to the weights of edges around a vertex~$v$ (where the total corrections to the weights on the edges from~$v$ into any class is the same, as will naturally occur in our setup) we will break down these corrections into a set $\AA$ of corrections we can make using the first type of gadget and a set $\BB$ of corrections we can make using the second type of gadget.

\begin{lemma}\label{weightdecomp}  Let $r\geq 3$ and $n\in\N$. Let~$G$ be an $r$-partite graph on $(V_1,\ldots,V_r)$, where $|V_1|=\ldots=|V_r|=n$. Let $j\in[r]$, $v\in V_j$ and $z\in \R$. Let $z_{vu}\in \R$ for each $u\in N(v)$, and suppose that for each $i\in [r]\setminus\{j\}$ we have
\[
\sum_{u\in V_i\cap N(v)}z_{vu}=z.
\]
Let $\VV$ be the set of sets $V\subset N(v)$ with $|V\cap V_i|=1$ for each $i\in[r]\setminus\{j\}$.
Then, there are sets $\AA\subset \{(A,a):A\in \VV,a\in \R\}$ and $\BB\subset \{(u_1,u_2,b):u_1,u_2\in V_i\cap N(v)\text{ for some }i\in[r]\setminus\{j\},b>0\}$ so that the following hold.
\begin{enumerate}[label=\rm{(\roman{enumi})}]
\item \label{mary1} For each $(A,a)\in \AA$ and $u\in A$, $\mathrm{sgn}(z_{vu})=\mathrm{sgn}(a)$.
\item \label{mary2} For each $(u_1,u_2,b)\in \BB$, $z_{vu_1}>0$ and $z_{vu_2}<0$.
\item \label{mary3} For each $u\in N(v)$,
\[
z_{vu}=\sum_{(A,a)\in \AA}a\cdot\mathbf{1}_{\{u\in A\}}+\sum_{(u_1,u_2,b)\in \BB}b\cdot (\mathbf{1}_{\{u=u_1\}}-\mathbf{1}_{\{u=u_2\}}).
\]
\end{enumerate} 
\end{lemma}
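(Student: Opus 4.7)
The plan is to split the required corrections into a within-class piece handled by $\BB$-triples and a cross-class transversal piece handled by $\AA$-pairs. Negating all $z_{vu}$, swapping $(u_1,u_2)\to(u_2,u_1)$ in each $\BB$-triple, and $a\to -a$ in each $\AA$-pair preserves all three conclusions while reversing the sign of the common class-sum, so I may assume $z\ge 0$ throughout.

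\emph{Stage 1 (building $\BB$).} For each $i\in[r]\setminus\{j\}$, let $P_i=\{u\in V_i\cap N(v):z_{vu}>0\}$ and $Q_i=\{u\in V_i\cap N(v):z_{vu}<0\}$. Since $\sum_{u\in V_i\cap N(v)}z_{vu}=z\ge 0$, the positive mass $\sum_{u\in P_i}z_{vu}$ is at least the negative mass $\sum_{u\in Q_i}|z_{vu}|$, so a standard transportation argument (e.g.\ the northwest-corner rule) yields coefficients $b_{u_+,u_-}\ge 0$ with $\sum_{u_+\in P_i}b_{u_+,u_-}=|z_{vu_-}|$ for every $u_-\in Q_i$ and $\sum_{u_-\in Q_i}b_{u_+,u_-}\le z_{vu_+}$ for every $u_+\in P_i$. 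I put $(u_+,u_-,b_{u_+,u_-})$ into $\BB$ for every pair with $b_{u_+,u_-}>0$, which immediately satisfies~\ref{mary2}. Define residuals $z'_{vu}:=z_{vu}-\sum_{u_-\in Q_i}b_{u,u_-}\ge 0$ for $u\in P_i$ and $z'_{vu}:=0$ for $u\in Q_i$; these are non-negative and still satisfy $\sum_{u\in V_i\cap N(v)}z'_{vu}=z$ for every $i$.

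\emph{Stage 2 (building $\AA$) and verification.} If $z=0$ then $\AA=\emptyset$ works. Otherwise, I decompose $(z'_{vu})$ greedily into weighted transversals of $\prod_{i\in[r]\setminus\{j\}} P_i$: at each step, for every class $i$ choose $u_i\in P_i$ maximising the current $z'_{vu_i}$, set $c:=\min_i z'_{vu_i}$, add $(\{u_i:i\in[r]\setminus\{j\}\},c)$ to $\AA$, and subtract $c$ from each chosen $z'_{vu_i}$. Here $c>0$ as long as some residual is nonzero (each class sum equals the common remaining value, which is strictly positive until the process finishes); each iteration preserves equality of class sums and zeros out at least one residual, so after at most $\sum_i|P_i|$ iterations all residuals vanish. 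Every transversal $A$ produced lies inside $\bigcup_i P_i$ and carries positive weight, giving $\mathrm{sgn}(z_{vu})=+=\mathrm{sgn}(c)$ for $u\in A$, i.e.\ condition~\ref{mary1}; checking the three cases $u\in P_i$, $u\in Q_i$ and $z_{vu}=0$ separately shows that the combined contribution at each $u$ equals $z_{vu}$, establishing~\ref{mary3}. The only genuine obstacle is reconciling the sign conditions~\ref{mary1}--\ref{mary2} with the construction, and the design choice of cancelling \emph{all} negatives within each class during Stage~1 makes this automatic, since the residuals passed to Stage~2 are supported entirely on the positive vertices.
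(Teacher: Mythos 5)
Your proof is correct, but it takes a genuinely different route from the paper's. The paper argues by induction on the number of non-zero values $z_{vu}$: at each step it locates the non-zero $z_{vu_i}$ of minimal absolute value and peels off either a $\BB$-triple (if some $z_{vu_i'}$ of opposite sign exists in the same class) or an $\AA$-transversal (otherwise), with the minimality of $|z_{vu_i}|$ ensuring that no remaining residual changes sign — so the $\AA$- and $\BB$-moves are interleaved rather than separated. Your construction, by contrast, is a clean two-stage decomposition: Stage~1 cancels all within-class sign disagreements via a transportation plan, producing $\BB$ in one shot and leaving a non-negative residual with the same class sums; Stage~2 then decomposes that residual greedily into positive-weight transversals, producing $\AA$. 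This decoupling makes the sign conditions~\ref{mary1}–\ref{mary2} automatic by design rather than something to be tracked inductively, and the termination argument (each greedy iteration zeroes at least one residual, and $c>0$ so long as the common class sum is positive) is routine. The one point worth being explicit about is that when $z>0$ each $P_i$ is non-empty, which you use implicitly in Stage~2; it follows since $\sum_{u\in P_i}z_{vu}\ge z>0$. Both proofs are valid and of comparable length; yours arguably reveals the structure of the decomposition more transparently, while the paper's single induction keeps the bookkeeping local.
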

\begin{proof} Suppose without loss of generality that $j=r$. We will prove the lemma by induction on the number of weights $z_{vu}$, $u\in N(v)$, which are non-zero. If all the weights are non-zero then we simply take $\AA=\emptyset$ and $\BB=\emptyset$.

Let us suppose then that there is some $i\in [r-1]$ and $u_i\in V_i\cap N(v)$ with $z_{vu_i}\neq 0$, and furthermore select such an $i$ and $u_i$ so that $|z_{vu_i}|$ is minimised. Assume that $z_{vu_i}>0$, where the case where $z_{vu_i}<0$ follows similarly.

If there is some $u_i'\in V_i\cap N(v)$ with $z_{vu_i'}<0$, then let $b=z_{vu_i}$. For each $u\in N(v)$, let
\[
z'_{vu}=\begin{cases}
z_{vu}-b & \text{ if }u=u_i \\ 
z_{vu}+b & \text{ if }u=u'_i \\ 
z_{vu} & \text{ otherwise. }
\end{cases}
\]
As $z'_{vu_i}=0$ and $z_{vu_i},z_{vu_i'}\neq 0$, there is at least one more non-zero weight $z_{vu}$ than non-zero weight $z'_{vu}$, and hence by the induction hypothesis there exist sets $\AA'\subset \{(A,a):A\in \VV,a\in \R\}$ and $\BB'\subset \{(u_1',u_2',b):u_1',u_2'\in V_j\text{ for some }j\in[r-1],b\in \R\}$ for which \ref{mary1}--\ref{mary3} hold with the weights $z'_{vu}$.
Note that, due to the choice of~$i$ and~$u_i$, $\mathrm{sgn}(z_{vu})=\mathrm{sgn}(z'_{vu})$ for each $u\in N(v)$. Let $\AA=\AA'$ and $\BB=\BB'\cup \{(u_i,u_i',b)\}$; \ref{mary1}--\ref{mary3} hold for $\AA$ and $\BB$ with the weights $z_{vu}$.

Therefore, we may assume that there is no $u_i'\in V_i\cap N(v)$ with $z_{vu_i'}<0$, and thus $z=\sum_{u\in V_i\cap N(v)}z_{vu}\geq z_{vu_i}>0$. Therefore, for each $j\in [r-1]\setminus\{i\}$, as $\sum_{u\in V_j\cap N(v)}z_{vu}=z>0$, we can find a vertex $u_j\in V_j\cap N(v)$ with $z_{vu_j}>0$. Let $a=z_{vu_i}$. 
 For each $u\in N(v)$, let
\[
z'_{vu}=\begin{cases}
z_{vu}-a & \text{ if }u=u_j\text{ for some }j\in[r-1] \\ 
z_{vu} & \text{ otherwise. }
\end{cases}
\]
As $z'_{vu_i}=0$ and $z_{vu_j}>0$ for each $j\in[r]$, there is at least one more non-zero weight~$z_{vu}$ than non-zero weight $z'_{vu}$, and thus by the induction hypothesis there exist sets $\AA'\subset \{(A,a):A\in \VV,a\in \R\}$ and $\BB'\subset \{(u_1',u_2',b):u_1',u_2'\in V_j\text{ for some }j\in[r-1],b\in \R\}$ for which \ref{mary1}--\ref{mary3} hold with the weights $z'_{vu}$.
Note that, due to the choice of~$i$ and $u_i$, $\mathrm{sgn}(z_{vu})=\mathrm{sgn}(z'_{vu})$ for each $u\in N(v)$. Let $\AA=\AA'\cup \{(\{u_j:j\in[r-1]\},a)\}$ and $\BB=\BB'$; \ref{mary1}--\ref{mary3} hold for $\AA$ and $\BB$ with the weights $z_{vu}$. This completes the inductive step, and hence the proof of the lemma.
\end{proof}


\section{Moving weight onto a clique}\label{sec:moveweight}

Having constructed our gadgets in the previous section, we will now use them to move weight around the graph. Given corrections we wish to make to the weight on the edges around a vertex $v$ in the graph, we will use the gadgets to make these corrections, while necessarily making changes to the weight on other edges. By ensuring that the vertices which take the role of $v'$ in our gadgets lie in some fixed subset $V$, we will be able to make all the required corrections to the weight on edges in the graph, except for edges with at least one endvertex in $V$. We will first give a definition which will allow us to specify sets $V$ into which we can move weight in this manner, before using our gadgets to make corrections to the weight on edges next to a vertex $v$ at the expense of alterations made to the weight on edges adjacent to vertices in the same class as $v$ and in $V$, proving Lemma~\ref{weightinto1set}.

\begin{definition} Let $r\geq 3$, let~$G$ be an $r$-partite graph on $(V_1,\ldots,V_r)$, and let $j\in[r]$. We say a set $V\subset V_j$ is \emph{$j$-neighbour-rich} if for each subset $W\subset V(G)\setminus V_j$ with $|W|\leq r$ we have $|V\cap (\cap_{u\in W}N(u))|\geq |V|/2$.
\end{definition}


\begin{lemma}\label{weightinto1set}  Let $r\geq 3$ and $n\ge 8r^2$. Let~$G$ be an $r$-partite graph on $(V_1,\ldots,V_r)$, where $|V_1|=\ldots=|V_r|=n$ and $\hat{\d}(G)\geq (1-1/8r^2)n$. Let $j\in[r]$, $v\in V_j$ and $z\in \R$. Let $z_{vu}\in [-1,1]$ for each $u\in N(v)$, and suppose that for each $i\in [r]\setminus\{j\}$ we have
\begin{equation*}
\sum_{u\in V_i\cap N(v)}z_{vu}=z.
\end{equation*}
Let $V\subset V_j$ be a $j$-neighbour-rich vertex set with $v\in V_j\setminus V$. 

Then, there is a  zero-sum function $\psi:\KK_{[r]}\to \R$ such that the following hold.
\stepcounter{capitalcounter}
\begin{enumerate}[label =\bfseries \Alph{capitalcounter}\arabic*]
\item For each $u\in N(v)$, we have $\sum_{K\in \KK_{[r]}:vu\in E(K)}\psi(K)=z_{vu}$.\label{tue1}
\item If $e\in E(G)$ and $V(e)\cap (V\cup \{v\})=\emptyset$ or $V(e)\cap N(v)=\emptyset$, then \label{tue2}
\[
\sum_{K\in \KK_{[r]}:e\in E(K)}\psi(K)=0.
\]
\item If $uw\in E(G)$ with $u\in V$ and $w\in N(v)$, then \label{tue3}
\[
\Big|\sum_{K\in \KK_{[r]}:uw\in E(K)}\psi(K)\Big|\leq 2|z_{vw}|/|V|.
\]
\item For each $K\in \KK_{[r]}$, letting $C=n\sum_{u\in V(K)\cap N(v)}|z_{vu}|+2\sum_{u\in N(v)}|z_{vu}|$, we have\label{tue4}
\[
|\psi(K)|\leq 
\left\{
\begin{array}{ll}
\frac{2nC}{k_{[r]}} & \text{ if }v\in V(K) 
\\
\frac{4nC}{|V|k_{[r]}} &\text{ if }V(K)\cap V\neq \emptyset 
\\
0 & \text{ otherwise.}
\end{array}
\right.
\]
\end{enumerate}
\end{lemma}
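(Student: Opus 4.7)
The plan is to decompose the target edge-weight corrections using Lemma~\ref{weightdecomp} and then realize each piece by averaging one of the two gadgets from Section~\ref{sec:gadgets} over admissible choices of the displacement vertex $v'$ drawn from the $j$-neighbour-rich set $V$. Concretely, I would first invoke Lemma~\ref{weightdecomp} to obtain sets $\AA$ and $\BB$ that represent $z_{vu}$ as in~\ref{mary3}. For each $(A,a)\in \AA$, set $V_A=V\cap \bigcap_{u\in A}N(u)$; since $|A|=r-1\leq r$ and $V$ is $j$-neighbour-rich, $|V_A|\geq |V|/2$. For each $v'\in V_A$ apply Lemma~\ref{weightK2r} (with $v'$ in its namesake role and $u_i$ the unique vertex of $A$ in $V_i$) to get a zero-sum $\psi_{A,v'}$, and set $\psi_A=(a/|V_A|)\sum_{v'\in V_A}\psi_{A,v'}$. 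Analogously, for $(u_1,u_2,b)\in\BB$ let $V_{u_1,u_2}=V\cap N(u_1)\cap N(u_2)$ (again of size $\geq |V|/2$) and average the functions supplied by Lemma~\ref{weight4cycle} to form $\psi_{u_1,u_2}$. Finally, put
\[
\psi=\sum_{(A,a)\in\AA}\psi_A+\sum_{(u_1,u_2,b)\in\BB}\psi_{u_1,u_2},
\]
which is zero-sum since each summand is.

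Verifying~\ref{tue1} is then an immediate bookkeeping consequence of Lemma~\ref{weightdecomp}\ref{mary3} together with the edge-weight effects specified in Lemmas~\ref{weightK2r}(i) and~\ref{weight4cycle}(i). For~\ref{tue2}, note that every individual $\psi_{A,v'}$ or $\psi_{u_1,u_2,v'}$ changes the weight only on edges with one endpoint in $\{v,v'\}\subset \{v\}\cup V$ and the other in $N(v)$. For~\ref{tue3}, fix $u\in V$ and $w\in N(v)$; the only contributing terms are the $\psi_{A,u}$ with $w\in A$ and $u\in V_A$ and the $\psi_{u_1,u_2,u}$ with $w\in\{u_1,u_2\}$ and $u\in V_{u_1,u_2}$, each altering the weight on $uw$ by at most $|a|/|V_A|\leq 2|a|/|V|$ or $b/|V_{u_1,u_2}|\leq 2b/|V|$. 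Using the sign consistency of Lemma~\ref{weightdecomp}\ref{mary1},\ref{mary2}, one obtains the identity $|z_{vw}|=\sum_{(A,a):w\in A}|a|+\sum_{(u_1,u_2,b):w\in\{u_1,u_2\}}b$, from which the bound $2|z_{vw}|/|V|$ follows after summation.

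The main obstacle, which I expect to require the most care, is~\ref{tue4}. Fix $K\in\KK_{[r]}$. If $v\in V(K)$ then $V(K)\cap V=\emptyset$, and for each $(A,a)$ every $v'\in V_A$ has $v'\notin V(K)$, so all $|V_A|$ terms of $\psi_A$ are potentially nonzero; grouping by the (at most one) vertex $w\in V(K)\cap A$ and applying the case split of Lemma~\ref{weightK2r}(ii), the "$|V(K)\cap A|=1$" contributions sum (together with the analogous Lemma~\ref{weight4cycle}(ii) contributions from $\BB$) to at most $(2n^2/k_{[r]})\sum_{w\in V(K)\cap N(v)}|z_{vw}|$, again by the identity from the previous paragraph. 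The "$V(K)\cap A=\emptyset$" contributions are bounded by $(2rn/k_{[r]})\sum_{(A,a)}|a|$, and from summing Lemma~\ref{weightdecomp}\ref{mary3} one has $(r-1)\sum_{(A,a)}|a|+2\sum_{(u_1,u_2,b)}b=\sum_u|z_{vu}|$, giving at most $(4n/k_{[r]})\sum_{u\in N(v)}|z_{vu}|$. Together these produce the claimed $2nC/k_{[r]}$ bound. In the case $V(K)\cap V\neq\emptyset$ the unique $v'\in V(K)\cap V_j$ is the only value of $v'$ that can contribute for any choice of $(A,a)$ or $(u_1,u_2,b)$, introducing an extra factor $1/|V_A|\leq 2/|V|$ (resp.\ $1/|V_{u_1,u_2}|\leq 2/|V|$) throughout, which yields the $4nC/(|V|k_{[r]})$ bound. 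When $v\notin V(K)$ and $V(K)\cap V=\emptyset$, every individual gadget vanishes on $K$, giving the final case.
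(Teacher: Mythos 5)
Your proposal is correct and matches the paper's proof essentially step for step: you form the same decomposition via Lemma~\ref{weightdecomp}, average the same two gadgets over the same sets $V\cap\bigcap_{u\in A}N(u)$ and $V\cap N(u_1)\cap N(u_2)$, and the verification of \textbf{D1}--\textbf{D4} follows the same case analysis, including the key sign-consistency identity $|z_{vw}|=\sum_{(A,a):w\in A}|a|+\sum_{(u_1,u_2,b):w\in\{u_1,u_2\}}b$. The only cosmetic difference is that the paper bounds $\mathbf{1}_{\{|V(K)\cap A|=1\}}$ by $|V(K)\cap A|$ before regrouping rather than grouping directly by the single intersection vertex, but the outcome is identical.
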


\begin{proof} Suppose without loss of generality that $j=r$. Let $\VV$ be the set of sets $A\subset N(v)$ with $|A\cap V_i|=1$ for each $i\in[r-1]$.
By Lemma~\ref{weightdecomp}, we may take sets $\AA\subset \{(A,a):A\in \VV,a\in \R\}$ and $\BB\subset \{(u_1,u_2,b):u_1,u_2\in V_i\cap N(v)\text{ for some }i\in[r-1],b>0\}$ which satisfy the following.
\stepcounter{capitalcounter}
\begin{enumerate}[label =\bfseries \Alph{capitalcounter}\arabic*]
\item \label{mary12} For each $(A,a)\in \AA$ and $u\in A$, $\mathrm{sgn}(z_{vu})=\mathrm{sgn}(a)$.
\item \label{mary22} For each $(u_1,u_2,b)\in \BB$,  $z_{vu_1}>0$ and  $z_{vu_2}<0$.
\item \label{mary32} For each $u\in N(v)$,
\[
z_{vu}=\sum_{(A,a)\in \AA}a\cdot\mathbf{1}_{\{u\in A\}}+\sum_{(u_1,u_2,b)\in \BB}b\cdot (\mathbf{1}_{\{u=u_1\}}-\mathbf{1}_{\{u=u_2\}}).
\]
\end{enumerate} 
Note that~\ref{mary12}-\ref{mary32} imply that for each $u\in N(v)$
\begin{equation}\label{sat0}
|z_{vu}|=\sum_{(A,a)\in \AA}|a|\cdot\mathbf{1}_{\{u\in A\}}+\sum_{(u_1,u_2,b)\in \BB}|b|\cdot |\mathbf{1}_{\{u=u_1\}}-\mathbf{1}_{\{u=u_2\}}|.
\end{equation}

For each $(A,a)\in \AA$, as $V$ is $r$-neighbour-rich, we have $|V\cap (\bigcap_{u\in A}N(u))|\geq |V|/2$.
For each $(A,a)\in \AA$, and each vertex $v'\in V\cap (\bigcap_{u\in A}N(u))\subset V_r$, by Lemma~\ref{weightK2r} we may let $\psi_{A,v'}:\KK_{[r]}\to \R$ be a zero-sum function satisfying the following.
\stepcounter{capitalcounter}
\begin{enumerate}[label =\bfseries \Alph{capitalcounter}\arabic*]
\item For each $e\in E(G)$,\label{sun1}
\begin{equation*}
\sum_{K\in \KK_{[r]}:e\in E(K)}\psi_{A,v'}(K)= \mathbf{1}_{\{V(e)\cap A\neq\emptyset\}}\cdot (\mathbf{1}_{\{v\in V(e)\}}-\mathbf{1}_{\{v'\in V(e)\}}).
\end{equation*}
\item For each $K\in \KK_{[r]}$,\label{sun2}
\begin{equation*}
|\psi_{A,v'}(K)|\leq \left\{
\begin{array}{ll}
2n^2/k_{[r]} & \text{ if }|V(K)\cap A|=1\text{ and }|V(K)\cap\{v,v'\}|=1 
\\
2rn/k_{[r]} &\text{ if }|V(K)\cap A|=0\text{ and }|V(K)\cap\{v,v'\}|=1
\\
0 & \text{ otherwise.}
\end{array}
\right.
\end{equation*} 
\end{enumerate}

For each $(u_1,u_2,b)\in \BB$, as $V$ is $r$-neighbour-rich, we have $|V\cap N(u_1)\cap N(u_2)|\geq |V|/2$.
For each $(u_1,u_2,b)\in \BB$ and each vertex $v'\in V\cap N(u_1)\cap N(u_2)$, by Lemma~\ref{weight4cycle} we may let $\psi_{u_1,u_2,v'}$ be a zero-sum function satisfying the following.
\stepcounter{capitalcounter}
\begin{enumerate}[label =\bfseries \Alph{capitalcounter}\arabic*]
\item For each $e\in E(G)$,\label{sun3}
\begin{equation*}
\sum_{K\in \KK_{[r]}:e\in E(K)}\psi_{u_1,u_2,v'}(K)= \mathbf{1}_{\{e\in\{vu_1,v'u_2\}\}}-\mathbf{1}_{\{e\in\{vu_2,v'u_1\}\}}.
\end{equation*}

\item For each $K\in \KK_{[r]}$, we have\label{sun4}
\begin{equation*}\label{sundayd}
|\psi_{u_1,u_2,v'}(K)|\leq 2n^2/k_{[r]}\cdot \mathbf{1}_{\{|V(K)\cap \{v,v',u_1,u_2\}|=2\}}.
\end{equation*}
\end{enumerate}

For each $K\in \KK_{[r]}$, let
\begin{equation}\label{train1}
\psi_1(K)=\sum_{(A,a)\in \AA}\frac{a}{|V\cap(\bigcap_{u\in A}N(u))|}\sum_{v'\in V\cap (\bigcap_{u\in A}N(u))}\psi_{A,v'}(K),
\end{equation}
\begin{equation}\label{train2}
\psi_2(K)= \sum_{(u_1,u_2,b)\in \BB}\frac{b}{|V\cap N(u_1)\cap N(u_2)|}\sum_{v'\in V\cap N(u_1)\cap N(u_2)} \psi_{u_1,u_2,v'}(K),
\end{equation}
and $\psi(K)=\psi_1(K)+\psi_2(K)$.
We will show that $\psi$ satisfies our requirements, noting first that as $\psi$ is a weighted sum of zero-sum functions it is itself a zero-sum function.

To prove that~\ref{tue1} holds note that, for each $u\in N(v)$, by~\eqref{train1} and~\ref{sun1},
\begin{align}
\sum_{K\in \KK_{[r]}:vu\in E(K)}&\psi_1(K)\nonumber
\\
&=\sum_{(A,a)\in \AA}\frac{a}{|V\cap(\bigcap_{z\in A}N(z))|}\sum_{v'\in V\cap (\bigcap_{z\in A}N(z))}\sum_{K\in \KK_{[r]}:vu\in E(K)}\psi_{A,v'}(K)\nonumber
\\
&=\sum_{(A,a)\in \AA}\frac{a}{|V\cap(\bigcap_{z\in A}N(z))|}\sum_{v'\in V\cap (\bigcap_{z\in A}N(z))}\mathbf{1}_{\{u\in A\}}\nonumber
\\
&=\sum_{(A,a)\in \AA}a\cdot\mathbf{1}_{\{u\in A\}},\label{lastone0}
\end{align}
and, by~\eqref{train2} and~\ref{sun3},
\begin{align}
\sum_{K\in \KK_{[r]}:vu\in E(K)}&\psi_2(K) \nonumber
\\
&=\sum_{(u_1,u_2,b)\in \BB}\frac{b}{|V\cap N(u_1)\cap N(u_2)|}\sum_{v'\in V\cap N(u_1)\cap N(u_2)} \sum_{K\in \KK_{[r]}:vu\in E(K)} \psi_{u_1,u_2,v'}(K)\nonumber
\\
&=\sum_{(u_1,u_2,b)\in \BB}\frac{b}{|V\cap N(u_1)\cap N(u_2)|}\sum_{v'\in V\cap N(u_1)\cap N(u_2)}  (\mathbf{1}_{\{u=u_1\}}-\mathbf{1}_{\{u=u_2\}})\nonumber
\\
&= \sum_{(u_1,u_2,b)\in \BB}b \cdot (\mathbf{1}_{\{u=u_1\}}-\mathbf{1}_{\{u=u_2\}}).\label{lastone}
\end{align}
Therefore, by~\eqref{lastone0},~\eqref{lastone},~\ref{mary32} and the definition of $\psi=\psi_1+\psi_2$, for each $u\in N(v)$ we have $\sum_{K\in \KK_{[r]}:vu\in E(K)}\psi(K)=z_{vu}$, and thus~\ref{tue1} holds.

To prove that~\ref{tue2} holds, let $e\in E(G)$ with $V(e)\cap (V\cup \{v\})=\emptyset$ or $V(e)\cap N(v)=\emptyset$. For each $(A,a)\in \AA$ and $v'\in V\cap (\bigcap_{u\in A}N(u))$, as $A\subset N(v)$ we either have that $V(e)\cap\{v,v'\}=\emptyset$ or $V(e)\cap A=\emptyset$. Therefore, by~\ref{sun1},
$\sum_{K\in \KK_{[r]}:e\in E(K)}\psi_{A,v'}(K)=0$. 
Similarly, for each $(u_1,u_2,b)\in \BB$ and $v'\in V\cap N(u_1)\cap N(u_2)$ we have either $V(e)\cap\{v,v'\}=\emptyset$ or $V(e)\cap\{u_1,u_2\}=\emptyset$, and thus by~\ref{sun3} that
$\sum_{K\in \KK_{[r]}:e\in E(K)}\psi_{u_1,u_2,v'}(K)=0$. Therefore, by~\eqref{train1},~\eqref{train2} and as $\psi=\psi_1+\psi_2$, we have
$\sum_{K\in \KK_{[r]}:e\in E(K)}\psi(K)=0$, and thus~\ref{tue2} holds.

To prove that~\ref{tue3} holds, let $uw\in E(G)$ with $u\in V$ and $w\in N(v)$. For each $(A,a)\in \AA$ and $v'\in V\cap (\bigcap_{z\in A}N(z))$, as $u\neq v$ and $u\notin A$, we have by~\ref{sun1} that
\begin{equation}\label{train3}
\sum_{K\in \KK_{[r]}:uw\in E(K)}\psi_{A,v'}(K)=-\mathbf{1}_{\{w\in A\}}\cdot\mathbf{1}_{\{u=v'\}},
\end{equation}
and for each $(u_1,u_2,b)\in \BB$ and $v'\in V\cap N(u_1)\cap N(u_2)$, we have by~\ref{sun3} that
\begin{equation}\label{train4}
\sum_{K\in \KK_{[r]}:uw\in E(K)}\psi_{u_1,u_2,v'}(K)=\mathbf{1}_{\{u=v'\}}\cdot (\mathbf{1}_{\{w=u_2\}}-\mathbf{1}_{\{w=u_1\}}).
\end{equation}
Therefore, by~\eqref{train1} and \eqref{train3} we have
\begin{align}
\Big|\sum_{K\in \KK_{[r]}:uw\in E(K)}&\psi_1(K)\Big|\nonumber
\\
&=\Big|\sum_{(A,a)\in \AA}\frac{a}{|V\cap(\bigcap_{z\in A}N(z))|}
\sum_{v'\in V\cap (\bigcap_{z\in A}N(z))}\mathbf{1}_{\{w\in A\}}\cdot \mathbf{1}_{\{u=v'\}}\Big|\nonumber
\\
&\leq\Big|\sum_{(A,a)\in \AA}\frac{a\cdot \mathbf{1}_{\{w\in A\}}}{|V\cap(\bigcap_{z\in A}N(z))|}\Big|
\leq \frac{2}{|V|}\sum_{(A,a)\in \AA}|a|\cdot \mathbf{1}_{\{w\in A\}},\label{sat1}
\end{align}
and, by~\eqref{train2} and \eqref{train4} we have
\begin{align}
\Big|\sum_{K\in \KK_{[r]}:uw\in E(K)}&\psi_2(K)\Big|\nonumber
\\
&=\Big|\sum_{(u_1,u_2,b)\in \BB}\frac{b}{|V\cap N(u_1)\cap N(u_2)|}\sum_{v'\in V\cap N(u_1)\cap N(u_2)} \mathbf{1}_{\{u=v'\}}\cdot (\mathbf{1}_{\{w=u_2\}}-\mathbf{1}_{\{w=u_1\}})\Big|\nonumber
\\
&\leq\Big|\sum_{(u_1,u_2,b)\in \BB}\frac{b\cdot(\mathbf{1}_{\{w=u_2\}}-\mathbf{1}_{\{w=u_1\}})}{|V\cap N(u_1)\cap N(u_2)|}\Big|\nonumber
\\
&\leq\frac{2}{|V|}\sum_{(u_1,u_2,b)\in \BB}|b|\cdot|\mathbf{1}_{\{w=u_2\}}-\mathbf{1}_{\{w=u_1\}}|.\label{sat2}
\end{align}
Combining~\eqref{sat0},~\eqref{sat1} and~\eqref{sat2} we have
\[
\Big|\sum_{K\in \KK_{[r]}:uw\in E(K)}\psi(K)\Big|\leq 2|z_{vw}|/|V|,
\]
and thus~\ref{tue3} holds.

We will now prove~\ref{tue4}. First note that if $K\in \KK_{[r]}$, $v\notin V(K)$ and $V(K)\cap V=\emptyset$, then by~\ref{sun2},~\ref{sun4},~\eqref{train1}, and~\eqref{train2}, and as $\psi=\psi_1+\psi_2$, we have $\psi(K)=0$. This leaves us with two cases to consider with $K\in \KK_{[r]}$, that is, when $V(K)\cap V\neq \emptyset$ and when $v\in V(K)$. Note that, as $V$ and $\{v\}$ are disjoint subsets of $V_r$, these cases do not intersect, and in the former case $|V(K)\cap V|=1$.

Suppose then that $|V(K)\cap V|=1$. By~\eqref{train1} and~\ref{sun2} we have
\begin{align}
|\psi_1(K)|&\leq \sum_{(A,a)\in \AA}\frac{|a|}{|V\cap(\bigcap_{u\in A}N(u))|}\sum_{v'\in V\cap (\bigcap_{u\in A}N(u))}|\psi_{A,v'}(K)| \nonumber
\\
&\leq \sum_{(A,a)\in \AA}\frac{2|a|}{|V|}\sum_{v'\in V\cap (\bigcap_{u\in A}N(u))}\mathbf{1}_{\{v'\in V(K)\}}\cdot\left(\mathbf{1}_{\{|V(K)\cap A|=1\}}\frac{2n^2}{k_{[r]}}+\mathbf{1}_{\{|V(K)\cap A|=0\}}\frac{2rn}{k_{[r]}}\right)
\nonumber
\\
&\leq \sum_{(A,a)\in \AA}\frac{4n^2|a|}{|V|k_{[r]}}|V(K)\cap A|+\sum_{(A,a)\in \AA}\frac{4rn|a|}{|V|k_{[r]}}
\nonumber
\\
&\leq \frac{4n^2}{|V|k_{[r]}}\sum_{(A,a)\in \AA}|a|\cdot\Big(\sum_{u\in V(K)}\mathbf{1}_{\{u\in A\}}\Big)+\frac{8n}{|V|k_{[r]}}\sum_{(A,a)\in \AA}|a|\cdot\Big(\sum_{u\in V(G)}\mathbf{1}_{\{u\in A\}}\Big),\label{sat3}
\end{align}
where we have used that if $(A,a)\in \AA$ then $\sum_{u\in V(G)}\mathbf{1}_{\{u\in A\}}=|A|=r-1\geq r/2$.
Furthermore, by~\eqref{train2} and~\ref{sun4} we have
\begin{align}
|\psi_2(K)|&\leq \sum_{(u_1,u_2,b)\in \BB}\frac{|b|}{|V\cap N(u_1)\cap N(u_2)|}\sum_{v'\in V\cap N(u_1)\cap N(u_2)} |\psi_{u_1,u_2,v'}(K)|\nonumber
\\
&\leq \sum_{(u_1,u_2,b)\in \BB}\frac{2|b|}{|V|}\sum_{v'\in V\cap N(u_1)\cap N(u_2)} \frac{2n^2}{k_{[r]}}\cdot \mathbf{1}_{\{|\{u_1,u_2,v'\}\cap V(K)|=2\}}\nonumber
\\
&\leq \frac{4n^2}{|V|k_{[r]}}\sum_{(u_1,u_2,b)\in \BB}|b|\cdot \sum_{u\in V(K)}|\mathbf{1}_{\{u=u_1\}}-\mathbf{1}_{\{u=u_2\}}|.\label{sat4}
\end{align}
Let $C=n\sum_{u\in V(K)\cap N(v)}|z_{vu}|+2\sum_{u\in N(v)}|z_{vu}|$. Combining~\eqref{sat0},~\eqref{sat3}, and~\eqref{sat4}, while noting that, if $(A,a)\in \AA$, then $A\subset N(v)$ and, if $(u_1,u_2,b)\in \BB$, then $u_1,u_2\in N(v)$, we have that
\begin{align*}
|\psi(K)|&\leq \frac{4n^2}{|V|k_{[r]}}\sum_{u\in V(K)\cap N(v)}|z_{vu}|+\frac{8n}{|V|k_{[r]}}\sum_{u\in N(v)}|z_{vu}|= \frac{4nC}{|V|k_{[r]}},
\end{align*}
and thus~\ref{tue4} holds in the case where $|V(K)\cap V|=1$.

Now, for each clique $K\in \KK_{[r]}$ with $v\in V(K)$, by~\eqref{train1} and~\ref{sun2}, we have
\begin{align}
|\psi_1(K)|&\leq \sum_{(A,a)\in \AA}\frac{|a|}{|V\cap(\bigcap_{u\in A}N(u))|}\sum_{v'\in V\cap (\bigcap_{u\in A}N(u))}|\psi_{A,v'}(K)| \nonumber
\\
&\leq \sum_{(A,a)\in \AA}\frac{|a|}{|V\cap(\bigcap_{u\in A}N(u))|}\sum_{v'\in V\cap (\bigcap_{u\in A}N(u))}\left(\mathbf{1}_{\{|V(K)\cap A|=1\}}\frac{2n^2}{k_{[r]}}+\mathbf{1}_{\{|V(K)\cap A|=0\}}\frac{2rn}{k_{[r]}}\right)\nonumber
\\
&\leq \sum_{(A,a)\in \AA}\left(\frac{2n^2|a|}{k_{[r]}}|V(K)\cap A|+\frac{2rn|a|}{k_{[r]}}\right).\nonumber
\\
&\leq \frac{2n^2}{k_{[r]}}\sum_{(A,a)\in \AA}|a|\cdot\Big(\sum_{u\in V(K)}\mathbf{1}_{\{u\in A\}}\Big)
+\frac{4n}{k_{[r]}}\sum_{(A,a)\in \AA}|a|\cdot\Big(\sum_{u\in V(G)}\mathbf{1}_{\{u\in A\}}\Big),\label{sat5}
\end{align}
where we have again used that if $(A,a)\in \AA$ then $\sum_{u\in V(G)}\mathbf{1}_{\{u\in A\}}\geq r/2$.

Furthermore, by~\eqref{train2} and~\ref{sun4} we have
\begin{align}
|\psi_2(K)|&\leq \sum_{(u_1,u_2,b)\in \BB}\frac{|b|}{|V\cap N(u_1)\cap N(u_2)|}\sum_{v'\in V\cap N(u_1)\cap N(u_2)} |\psi_{u_1,u_2,v'}(K)|\nonumber
\\
&\leq \sum_{(u_1,u_2,b)\in \BB}\frac{|b|}{|V\cap N(u_1)\cap N(u_2)|}\sum_{v'\in V\cap N(u_1)\cap N(u_2)} \frac{2n^2}{k_{[r]}}\cdot \mathbf{1}_{\{|\{u_1,u_2\}\cap V(K)|=1\}}\nonumber
\\
&\leq \sum_{(u_1,u_2,b)\in \BB} \frac{2n^2|b|}{k_{[r]}}\cdot |\{u_1,u_2\}\cap V(K)|.\nonumber
\\
&\leq\frac{2n^2}{k_{[r]}}\sum_{(u_1,u_2,b)\in \BB}|b|\cdot\sum_{u\in V(K)}|\mathbf{1}_{\{u=u_1\}}-\mathbf{1}_{\{u=u_2\}}|\label{sat6}
\end{align}
Combining~\eqref{sat0},~\eqref{sat5}, and~\eqref{sat6}, while noting again that, if $(A,a)\in \AA$, then $A\subset N(v)$ and, if $(u_1,u_2,b)\in \BB$, then $u_1,u_2\in N(v)$, we have that
\begin{align*}
|\psi(K)|\leq \frac{2n^2}{k_{[r]}}\sum_{u\in V(K)\cap N(v)}|z_{vu}|+\frac{4n}{k_{[r]}}\sum_{u\in N(v)}|z_{vu}|= \frac{2nC}{k_{[r]}},
\end{align*}
and thus~\ref{tue4} also holds in the case where $v\in V(K)$, completing the proof of~\ref{tue4}.
\end{proof}

Lemma~\ref{weightinto1set} allows us to make corrections to the weights on edges incident to a vertex $v$, at the expense of changes to the weights on edges adjacent to vertices in a set $V$ which are in the same class as $V$. We will take a large set $V$ for which $V\cap V_j$ is $j$-neighbour-rich for each $j\in[r]$, and for each vertex outside $V$, lying say in $V_i$, we will use Lemma~\ref{weightinto1set} with $V\cap V_i$. This will allow us to make the correct adjustments to every edge which has no endvertex in $V$. Repeating a similar movement, we can then make the corrections to the weights on edges with exactly one endvertex in $V$. This will complete the required corrections to the weight on the edges not contained within the set $V$, giving us the following lemma.

\begin{lemma}\label{allweightinto1set} Let $r\geq 3$ and $n\ge 8r^2$. Let~$G$ be an $r$-partite graph on $(V_1,\ldots,V_r)$, where $|V_1|=\ldots=|V_r|=n$ and $\hat{\d}(G)\geq (1-1/8r^2)n$. Let $z_v\in \R$ for each $v\in V(G)$ and $z_{e}\in [-1,1]$ for each $e\in E(G)$. Suppose that for each $i\in[r]$ and $v\in V(G)\setminus V_i$ we have
\begin{equation}\label{lastminute2}
\sum_{u\in V_i\cap N(v)}z_{vu}=z_v.
\end{equation}
Let $V\subset V(G)$ be a set such that, for each $j\in [r]$, $V\cap V_j$ is $j$-neighbour-rich and $|V\cap V_j|=|V|/r$. 

Then, there is a zero-sum function $\psi:\KK_{[r]}\to \R$ such that the following hold.
\stepcounter{capitalcounter}
\begin{enumerate}[label =\bfseries \Alph{capitalcounter}\arabic*]
\item For each $e\in E(G)$ with $V(e)\not\subset V$, we have $\sum_{K\in \KK_{[r]}:e\in E(K)}\psi(K)=z_{e}$.\label{rand1}
\item For each $e\in E(G)$ with $V(e)\subset V$, we have 
\[
\Big|\sum_{K\in \KK_{[r]}:e\in E(K)}\psi(K)\Big|\leq 12n^2r^2/|V|^2.
\]\label{rand2}
\item For each $K\in \KK_{[r]}$, if $|V|\geq nr/2$, then $|\psi(K)|\leq 135n^2r^2/k_{[r]}$, and if $|V|\leq n$, then\label{rand3}
\begin{equation*}\label{sundaybbbbbbbb}
|\psi(K)|\leq \left\{
\begin{array}{ll}
15n^2r^2/k_{[r]} & \text{ if }V(K)\cap V=\emptyset
\\
51n^3r^2/k_{[r]}|V| &\text{ if }|V(K)\cap V|=1
\\
45n^4r^2|V(K)\cap V|^2/k_{[r]}|V|^2 & \text{ if }|V(K)\cap V|\geq 2.
\end{array}
\right.
\end{equation*}
\end{enumerate}
\end{lemma}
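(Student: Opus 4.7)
The plan is to apply Lemma~\ref{weightinto1set} in two rounds. In Round~1, for each $v \in V(G) \setminus V$ lying in class $V_{j(v)}$, invoke Lemma~\ref{weightinto1set} with vertex $v$, the $j(v)$-neighbour-rich set $V \cap V_{j(v)}$, and the halved input weights $\tilde{z}_{vu} = z_{vu}/2$ for $u \in N(v)$. Consistency holds since $\sum_{u \in V_i \cap N(v)} \tilde{z}_{vu} = z_v/2$ is independent of $i$. Let $\psi^{(1)}$ be the sum of the resulting $\psi_v^{(1)}$ over $v \notin V$. For any edge $e = uw$ with $u, w \notin V$, property~\ref{tue2} of Lemma~\ref{weightinto1set} forces applications at vertices $v' \neq u,w$ to contribute $0$ to $e$ (since $V(e)$ meets neither $V$ nor $\{v'\}$), while applications at $u$ and $w$ each contribute $z_e/2$ via~\ref{tue1}, so $\sum_K \psi^{(1)}(K)\mathbf{1}_{e \in E(K)} = z_e$ exactly on such edges.

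In Round~2 we repair the discrepancy on edges with at least one endpoint in $V$. For each $v \in V$ with $v \in V_{j(v)}$, invoke Lemma~\ref{weightinto1set} with vertex $v$ and a $j(v)$-neighbour-rich set $R_v \subseteq V_{j(v)} \setminus \{v\}$: take $R_v = V_{j(v)} \setminus V$ when $|V| \leq n$ (which is $j(v)$-neighbour-rich by the minimum-degree hypothesis, since $V_{j(v)}$ loses at most $|V|/r \leq n/r$ vertices to $V$ and at most $n/8r$ to any bounded family of non-neighbourhoods), and take $R_v$ to be a $j(v)$-neighbour-rich subset of $(V \cap V_{j(v)}) \setminus \{v\}$ when $|V| \geq nr/2$ (which exists because $|V \cap V_{j(v)}| \geq n/2$ is well above the threshold required for richness to survive the deletion of a single vertex). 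The input weights $\tilde{w}_{vu}$ are chosen so that $\sum_K (\psi^{(1)}+\psi^{(2)})(K)\mathbf{1}_{vu \in E(K)} = z_{vu}$ for every edge $vu$ with $u \notin V$, while the freedom in the values $\tilde{w}_{vu}$ for $u \in V$ is used to enforce the per-vertex consistency $\sum_{u \in V_i \cap N(v)} \tilde{w}_{vu} = c_v$ (independent of $i$) by distributing small corrections uniformly across edges from $v$ into $V \cap V_i$. A direct calculation using $|z_e| \leq 1$ and $|V \cap V_i \cap N(v)| \geq (1 - o(1))|V|/r$ verifies that the resulting inputs lie in $[-1,1]$ and produce compensating edge weights of magnitude $O(n^2 r^2/|V|^2)$ on edges fully inside $V$.

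Setting $\psi = \psi^{(1)} + \psi^{(2)}$, zero-sum is inherited by linearity. Property~\ref{rand1} holds by construction: Round~1 handles edges disjoint from $V$ exactly, and Round~2 handles edges with exactly one endpoint in $V$ exactly. Property~\ref{rand2} follows by summing the~\ref{tue3}-type side-effects of both rounds on edges inside $V$ together with the compensating weights above. For~\ref{rand3}, we apply the~\ref{tue4} bound to each of the $O(n)$ invocations in both rounds: a clique $K$ is affected only by applications whose centre lies in $V(K)$ or whose rich set $R_v$ meets $V(K)$, and the two regimes $|V| \geq nr/2$ and $|V| \leq n$ produce the two stated forms of the bound. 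The latter regime is more delicate because when the Round~2 rich sets $R_v = V_{j(v)} \setminus V$ lie outside $V$, the number of Round~2 applications whose rich set meets $V(K)$ grows as a function of $|V(K) \cap V|$, giving the three sub-cases in~\ref{rand3}.

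The main obstacle is the cross-talk in Round~2: the~\ref{tue3}-type side-effect from an application $\psi_{v'}^{(2)}$ falls on edges with one endpoint in $V$ that another application $\psi_v^{(2)}$ must correct exactly. This forces the inputs $\tilde{w}_{vu}$ to be defined implicitly as the solution of a linear system whose off-diagonal coefficients are of order $1/|R_{v'}|$, relating $\tilde{w}_{vu}$ to values $\tilde{w}_{v'v}$ indexed by $v' \in V$ in the same class as $u$. Since the off-diagonal entries are small, the system is a bounded perturbation of the identity and is invertible with bounded solution. Verifying that the resulting inputs stay in $[-1,1]$ and that the associated clique bounds match~\ref{rand3} is the core technical work of the proof.
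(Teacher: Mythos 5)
Your Round~1 essentially matches the paper's first step: both apply Lemma~\ref{weightinto1set} once for each $w\notin V$, with rich set $V\cap V_{j(w)}$ and (effectively) halved contributions, so that afterwards the residual is exactly zero on every edge with both endpoints outside $V$ and is $O(nr/|V|)$ on the remaining edges (the paper's Claim~\ref{claim0}). The divergence is Round~2, and there the proposal has a genuine gap.

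The paper's Round~2 keeps the applications centred at the \emph{same} vertices $w\notin V$ with the \emph{same} rich sets $V_w=V\cap V_{j(w)}\subset V$, fed by the Round~1 residuals $z'_{wu}$. The decisive fact is that $z'_{wu}=0$ whenever $u,w\notin V$, and the \ref{tue3}-type side-effect of the application at $w$ on an edge $u'w'$ with $u'\in V_w$ is bounded by $2|z'_{ww'}|/|V_w|$. Hence that side-effect vanishes whenever $w'\notin V$, so the second round's side-effects land \emph{only} on edges with both ends in $V$, and for any edge $uv$ with $u\notin V$, $v\in V$ the term $\psi'_u$ alone accounts exactly for $z'_{uv}$ because every other $\psi'_w$ contributes zero there. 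Consequently the paper's construction is fully explicit, with no linear system to invert.

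Your Round~2 instead centres the applications at vertices $v\in V$, with a rich set $R_v\subset V_{j(v)}\setminus V$ when $|V|\le n$. The \ref{tue3}-type side-effect of $\psi^{(2)}_v$ falls on edges $u'w'$ with $u'\in R_v$ and $w'\in N(v)$, and is bounded by $2|\tilde w_{vw'}|/|R_v|$. Since you necessarily take $\tilde w_{vw'}\ne 0$ for $w'\notin V$ (those are precisely the residuals you intend to repair at $v$), these side-effects land on edges $u'w'$ with $u'\in R_v\subset V(G)\setminus V$ and $w'\notin V$, i.e.\ on edges wholly outside $V$, which Round~1 has already made exact. This breaks property~\ref{rand1}, and it is not handled by the linear system you describe: that system couples the inputs $\tilde w_{vu}$ across centres $v\in V$, but the weight leaked onto edges with \emph{no} endpoint in $V$ is not controlled by any Round~2 application and would need a further round (or a different choice of $R_v$) to correct. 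In the regime $|V|\ge nr/2$, where $R_v\subset V$, the leakage problem disappears, but the cross-talk survives and the invertibility of the implied system, together with the $[-1,1]$ constraint on its solution, is only asserted. The paper's choice to keep both rounds centred outside $V$ avoids both difficulties simultaneously.
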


\begin{proof} For each $v\in V(G)$, let $j(v)\in[r]$ be such that $v\in V_{j(v)}$.
For each $w\in V(G)\setminus V$, by Lemma~\ref{weightinto1set} applied with the $j$-neighbour-rich set $V_w=V\cap V_{j(w)}$,  there exists a zero-sum function $\psi_w:\KK_{[r]}\to \R$, so that the following hold with 
\begin{equation}\label{latenight}
C_w=n\sum_{u\in V(K)\cap N(w)}|z_{wu}|+2\sum_{u\in N(w)}|z_{wu}|\leq 3nr.
\end{equation}
\stepcounter{capitalcounter}
\begin{enumerate}[label =\bfseries \Alph{capitalcounter}\arabic*]
\item For each $u\in N(w)$, we have $\sum_{K\in \KK_{[r]}:wu\in E(K)}\psi_w(K)=z_{wu}$.\label{tue12}
\item If $e\in E(G)$ and $V(e)\cap (V_w\cup \{w\})=\emptyset$ or $V(e)\cap N(w)=\emptyset$, then \label{tue22}
\[
\sum_{K\in \KK_{[r]}:e\in E(K)}\psi_w(K)=0.
\]
\item If $uv\in E(G)$ with $u\in V_w$ and $v\in N(w)$, then \label{tue32}
\[
\Big|\sum_{K\in \KK_{[r]}:uv\in E(K)}\psi_w(K)\Big|\leq 2|z_{wv}|/|V_w|\le 2r/|V|.
\]
\item For each $K\in \KK_{[r]}$, if $w\in V(K)$, then 
\[
|\psi_w(K)|\leq \frac{2nC_w}{k_{[r]}}\overset{\eqref{latenight}}{\leq} \frac{6n^2r}{k_{[r]}},
\] 
if $V(K)\cap V_w\neq\emptyset$, then 
\[
|\psi_w(K)|\leq \frac{4n C_w}{|V_w|k_{[r]}}\overset{\eqref{latenight}}{\leq}\frac{12n^2r^2}{|V|k_{[r]}},
\]
and if $V(K)\cap (\{w\}\cup V_w)=\emptyset$, then $\psi_w(K)=0$.\label{tue42}
\end{enumerate}
Now, for each $e\in E(G)$, let
\begin{equation}\label{stupid0}
z'_{e}=z_{e}-\frac{1}{2}\sum_{w\in V(G)\setminus V}\sum_{K\in\KK_{[r]}:e\in E(K)}\psi_w(K).
\end{equation}
\begin{claim}\label{claim0} For each $uv\in E(G)$, $|z'_{uv}|\leq 3nr/|V|$, and if $u,v\notin V$, then $z'_{uv}=0$.
\end{claim}
\begin{proof}[Proof of Claim~\ref{claim0}]
If $u,v\notin V$, then for each $w\in V(G)\setminus (V\cup \{u,v\})$ we have that $\{u,v\}\cap (V_{w}\cup\{w\})=\emptyset$ and so, by \ref{tue22}, $\sum_{K\in \KK_{[r]}:uv\in E(K)}\psi_{w}(K)=0$. By \ref{tue12}, we have $\sum_{K\in \KK_{[r]}:uv\in E(K)}\psi_v(K)=z_{uv}$ and $\sum_{K\in \KK_{[r]}:uv\in E(K)}\psi_u(K)=z_{uv}$. Therefore, by~\eqref{stupid0},
\begin{equation*}\label{stupid}
z'_{uv}=z_{uv}-\frac{1}{2}\sum_{K\in\KK_{[r]}:uv\in E(K)}(\psi_v(K)+\psi_u(K))=z_{uv}-\frac{1}{2}(z_{uv}+z_{uv})=0.
\end{equation*}

If $u\in V$ and $v\notin V$ then, by~\ref{tue22}, for each $w\in V(G)\setminus (V\cup\{v\})$ we have either $\sum_{K\in\KK_{[r]}:uv\in E(K)}\psi_{w}(K)=0$, or $u\in V_w\subset V_{j(w)}$ and $v\in N(w)$, in which case by~\ref{tue32} we have $|\sum_{K\in \KK_{[r]}:uv\in E(K)}\psi_{w}(K)|\leq 2r/|V|$. By~\ref{tue12}, $\sum_{K\in \KK_{[r]}:uv\in E(K)}\psi_v(K)=z_{uv}$ and therefore, by~\eqref{stupid0},
\begin{equation*}\label{stupid3}
|z'_{uv}|\leq |z_{uv}-z_{uv}/2|+n\cdot (2r/|V|)/2\leq 1+nr/|V|\leq 2nr/|V|.
\end{equation*}
Similarly, if $u\notin V$ and $v\in V$, then $|z'_{uv}|\leq 2nr/|V|$.

If $u,v\in V$, then by~\ref{tue22}, for each $w\in V(G)\setminus V$ with $\sum_{K\in \KK_{[r]}:uv\in E(K)}\psi_{w}(K)\neq 0$, we have either $u\in V_w$ and $v\in N(w)$ or $u\in V_w$ and $v\in N(w)$. In each case we must have $w\in V_{j(u)}\cup V_{j(v)}$ and, by~\ref{tue32}, $|\sum_{K\in \KK_{[r]}:uv\in E(K)}\psi_{w}(K)|\leq 2r/|V|$. Thus, by~\eqref{stupid0},
\begin{equation*}\label{stupid32}
|z'_{uv}|\leq |z_{uv}|+2n\cdot (2r/|V|)/2\leq 3nr/|V|.\qedhere
\end{equation*}
\end{proof}

For each $v\in V(G)$, let 
\begin{equation}\label{stupid2}
z'_v=z_v-\frac12\sum_{w\in V(G)\setminus V}\sum_{K\in \KK_{[r]}:v\in V(K)}\psi_{w}(K).
\end{equation}
For each $v\in V(G)$ and $i\in [r]\setminus \{j(v)\}$, we have from~\eqref{lastminute2},~\eqref{stupid0}, and~\eqref{stupid2} that
\begin{align*}
\sum_{u\in V_i\cap N(v)}z'_{vu}&=z_v-\frac12\sum_{u\in V_i\cap N(v)}\sum_{w\in V(G)\setminus V}\sum_{K\in\KK_{[r]}:uv\in E(K)}\psi_w(K)\\
&=z_v-\frac12\sum_{w\in V(G)\setminus V}\sum_{u\in V_i\cap N(v)}\sum_{K\in\KK_{[r]}:uv\in E(K)}\psi_w(K)\\
&=z_v-\frac12\sum_{w\in V(G)\setminus V}\sum_{K\in\KK_{[r]}:v\in V(K)}\psi_w(K)=z'_v.
\end{align*}
Thus, we can use Lemma~\ref{weightinto1set} with the weights $z'_{e}|V|/3nr$, $e\in E(G)$, and each set $V_w=V\cap V_{j(w)}$, $w\notin V$. That is, for each $w\notin V$ there exists a zero-sum function $\psi'_w:\KK_{[r]}\to \R$, so that the following hold, where for each $K\in \KK_{[r]}$ and $j\in[r]$ we let
\begin{equation}
C'_{K,j}=n|V(K)\cap(V\setminus V_{j})|+2|V|,\label{sunnight2}
\end{equation}
and for each $w\in V(G)\setminus V$ and $K\in \KK_{[r]}$ we let
\begin{align}
C'_{K,w}&=n\sum_{v\in V(K)\cap N(w)}|z'_{wv}|+2\sum_{v\in N(w)}|z'_{wv}|\nonumber
\\
&=n\sum_{v\in V(K)\cap N(w)\cap V}|z'_{wv}|+2\sum_{v\in N(w)\cap V}|z'_{wv}|
\overset{\eqref{sunnight2}}{\leq} \frac{3nr}{|V|}C'_{K,j(w)},\label{sunnight1}
\end{align}
where we have used Claim~\ref{claim0}.
\stepcounter{capitalcounter}
\begin{enumerate}[label =\bfseries \Alph{capitalcounter}\arabic*]
\item For each $u\in N(w)$, we have $\sum_{K\in \KK_{[r]}:wu\in E(K)}\psi'_w(K)=z'_{wu}$.\label{tue13}
\item If $e\in E(G)$ and $V(e)\cap (V_w\cup \{w\})=\emptyset$ or $V(e)\cap N(w)=\emptyset$, then \label{tue23}
\[
\sum_{K\in \KK_{[r]}:e\in E(K)}\psi'_w(K)=0.
\]
\item If $uv\in E(G)$ with $u\in V_w$ and $v\in N(w)$, then \label{tue33}
\[
\Big|\sum_{K\in \KK_{[r]}:uv\in E(K)}\psi'_w(K)\Big|\leq 2|z'_{wv}|/|V_w|=2r|z'_{wv}|/|V|.
\]
\item For each $K\in \KK_{[r]}$, if $w\in V(K)$ then \label{tue43}
\[
|\psi'_w(K)|\leq \frac{2nC'_{K,w}}{k_{[r]}}\overset{\eqref{sunnight1}}{\leq} \frac{6n^2rC'_{K,j(w)}}{|V|k_{[r]}},
\]
if $V(K)\cap V_w\neq\emptyset$, then 
\[
|\psi'_w(K)|\leq \frac{4nC'_{K,w}}{|V_w|k_{[r]}}\overset{\eqref{sunnight1}}{\leq} \frac{12n^2r^2C'_{K,j(w)}}{|V|^2k_{[r]}},
\]
and if $V(K)\cap(\{w\}\cup V_w)=\emptyset$ then $\psi'_w(K)=0$.
\end{enumerate}
For each $e\in E(G)$, let
\begin{equation}\label{mouse}
z''_{e}=z'_{e}-\sum_{w\in V(G)\setminus V}\sum_{K\in \KK_{[r]}:e\in E(K)}\psi'_{w}(K).
\end{equation}

\begin{claim} \label{claim1} For each $e\in E(G)$, if $V(e)\not\subset V$ then $z''_e=0$, and if $V(e)\subset V$ then $|z''_e|\leq 12n^2r^2|V|^2-9nr/|V|$.
\end{claim}
\begin{proof}[Proof of Claim~\ref{claim1}]
Let $uv\in E(G)$. Suppose that $u\notin V$ and $v\notin V$, so that, by Claim~\ref{claim0}, $z'_{uv}=0$. For each $w\in V(G)\setminus V$, we have by \ref{tue13}, \ref{tue23}, or \ref{tue33} that $\sum_{K\in \KK_{[r]}:uv\in E(K)}\psi'_w(K)=0$. Therefore, by~\eqref{mouse}, $z''_{uv}=z'_{uv}=0$.

Suppose that $u\notin V$ and $v\in V$. By~\ref{tue13}, $\sum_{K\in \KK_{[r]}:uv\in E(K)}\psi'_{u}(K)=z'_{uv}$. For each $w\in V(G)\setminus (V\cup V_{j(v)}\cup\{u\})$, as $\{u,v\}\cap (V_w\cup\{w\})=\emptyset$ we have by~\ref{tue23} that $\sum_{K\in \KK_{[r]}:uv\in E(K)}\psi'_{w}(K)=0$. For each $w\in V_{j(v)}\setminus (V\cup N(u))$, as $\{u,v\}\cap N(w)=\emptyset$ we have by~\ref{tue23} that $\sum_{K\in \KK_{[r]}:uv\in E(K)}\psi'_{w}(K)=0$. For each $w\in (V_{j(v)}\setminus V)\cap N(u)$, as $z'_{wu}=0$, we have by~\ref{tue33} that $\sum_{K\in \KK_{[r]}:uv\in E(K)}\psi'_{w}(K)=0$. Therefore, if $u\notin V$ and $v\in V$, then by~\eqref{mouse} $z''_{uv}=z'_{uv}-z'_{uv}=0$. Similarly, if $u\in V$ and $v\notin V$, then $z''_{uv}=0$.

Suppose that $u,v\in V$. By~\ref{tue23}, for each $w\in V(G)\setminus (V\cup V_{j(u)}\cup V_{j(v)})$ and each $w\in V(G)\setminus (V\cup N(u)\cup N(v))$ we have that $\sum_{K\in \KK_{[r]}:uv\in E(K)}\psi'_{w}(K)=0$. If $w\in V\cap V_{j(u)}$ and $w\in N(v)$, or if $w\in V\cap V_{j(v)}$ and $w\in N(u)$, then by~\ref{tue33} and Claim~\ref{claim0} we have 
\[
\Big|\sum_{K\in \KK_{[r]}:uv\in E(K)}\psi'_{w}(K)\Big|\leq 2r(3nr/|V|)/|V|=6nr^2/|V|^2.
\]
Therefore, by~\eqref{mouse},
\begin{equation*}\label{stupid4}
|z''_{uv}|\leq 3nr/|V|+2(n-|V|/r)\cdot 6nr^2/|V|^2=12n^2r^2/|V|^2-9nr/|V|.\qedhere
\end{equation*}
\end{proof}

For each $K\in \KK_{[r]}$, let $\psi(K)=\sum_{w\in V(G)\setminus V}(\psi_w(K)/2+\psi'_w(K))$, so that, by~\eqref{stupid0} and~\eqref{mouse}, for each $e\in E(G)$, $\sum_{K\in \KK_{[r]}:e\in E(K)}\psi(K)=z_e-z''_e$. If $V(e)\not\subset V$, then Claim~\ref{claim1} implies that~\ref{rand1} holds. If $V(e)\subset V$, then by Claim~\ref{claim1}, we have
$|\sum_{K\in \KK_{[r]}:e\in E(K)}\psi(K)|\leq |z_e|+|z''_e|\leq 12n^2r^2/|V|^2$, and thus~\ref{rand2} holds. Furthermore, as $\psi$ is a weighted sum of zero-sum functions it is itself a zero-sum function.

Finally, to prove~\ref{rand3}, let $K\in \KK_{[r]}$. By~\ref{tue42}, we have
\begin{align}
\sum_{w\in V(G)\setminus V}|\psi_w(K)| \nonumber
&\leq \sum_{w\in V(K)\setminus V}\frac{6n^2r}{k_{[r]}}
+
\sum_{w\in V(G)\setminus V:V(K)\cap V_w\neq \emptyset}\frac{12n^2r^2}{|V|k_{[r]}}
\\
&\leq r\cdot \frac{6n^2r}{k_{[r]}}
+
n\cdot |V(K)\cap V|\cdot \frac{12n^2r^2}{|V|k_{[r]}} \nonumber
\\
&= \frac{6n^2r^2}{k_{[r]}}\left(1+\frac{2n|V(K)\cap V|}{|V|}\right), \label{thurs1}
\end{align}
and, by~\ref{tue43} we have
\begin{align}
\sum_{w\in V(G)\setminus V}|\psi'_w(K)|
&\leq
\sum_{w\in V(K)\setminus V}\frac{6n^{2}r}{|V|k_{[r]}}C'_{K,j(w)}
+
\sum_{w\in V(G)\setminus V:V(K)\cap V_w\neq \emptyset}\frac{12n^{2}r^{2}}{|V|^{2}k_{[r]}}C'_{K,j(w)}.\label{tired}
\end{align}

Now, if $|V|\geq rn/2$, then by~\eqref{sunnight2} we have for each $j\in[r]$ that $C'_{K,j}\leq nr+ 2|V|\leq 4|V|$, and hence, by ~\eqref{tired},
\begin{align}
\sum_{w\in V(G)\setminus V}|\psi'_w(K)|
&\leq
r\cdot \frac{6n^{2}r}{|V|k_{[r]}}\cdot 4|V|+rn\cdot\frac{12n^{2}r^{2}}{|V|^{2}k_{[r]}}\cdot 4|V|\leq \frac{120n^2r^2}{k_{[r]}}. \label{tired3}
\end{align}
From~\eqref{thurs1} we have that
\begin{align*}
\frac{1}{2}\sum_{w\in V(G)\setminus V}|\psi_w(K)| 
&\leq  \frac{3n^2r^2}{k_{[r]}}\left(1+\frac{2n\cdot r}{rn/2}\right)=\frac{15n^2r^2}{k_{[r]}} .
\end{align*}
Together with the definition of $\psi$ and~\eqref{tired3}, we have that $|\psi(K)|\leq 135n^2r^2/k_{[r]}$, as required.

Suppose then that $|V|\leq n$. If $V(K)\cap V=\emptyset$, then, by~\eqref{sunnight2}, $C'_{K,j}=2|V|$ for each $j\in [r]$ and there are no vertices $w\in V(G)\setminus V$ with $V_w\cap V(K)\neq \emptyset$. Therefore, by~\eqref{thurs1},~\eqref{tired}, and the definition of $\psi$, we have
\begin{align}
|\psi(K)|
&\leq
\frac12\Big(\frac{6n^{2}r^2}{k_{[r]}}\Big)
+\sum_{w\in V(K)}\frac{6n^{2}r}{|V|k_{[r]}}\cdot 2|V|= \frac{15n^{2}r^2}{k_{[r]}}, \nonumber
\end{align}
as required.

Suppose that $|V(K)\cap V|=1$. For each $j\in[r]$ we have from~\eqref{sunnight2} that $C'_{K,j}\leq n+2|V|\leq 3n$. Furthermore, for each $w\in V(G)\setminus V$ with $V(K)\cap V_w\neq \emptyset$ we have $V(K)\cap V\cap V_{j(w)}\neq \emptyset$, and hence, as $|V(K)\cap V|=1$, $|V(K)\cap (V\setminus V_{j(w)})|=0$. Thus, by~\eqref{sunnight2},
$C'_{K,j(w)}=2|V|$.
Therefore, by~\eqref{tired} we have
\begin{align}
\sum_{w\in V(G)\setminus V}|\psi'_w(K)|
&\leq 
r\cdot \frac{6n^{2}r}{|V|k_{[r]}}\cdot 3n
+
n\cdot\frac{12n^{2}r^{2}}{|V|^{2}k_{[r]}}\cdot 2|V|= \frac{42n^3r^2}{|V|k_{[r]}}. \label{green1}
\end{align}
From~\eqref{thurs1} we have
\begin{align*}
\frac12\sum_{w\in V(G)\setminus V}|\psi_w(K)|
\leq \frac{3n^2r^2}{k_{[r]}}\left(1+\frac{2n}{|V|}\right)\leq \frac{9n^3r^2}{|V|k_{[r]}}. \nonumber
\end{align*}
Together with~\eqref{green1} and the definition of $\psi$, this gives $|\psi(K)|\leq 51n^3r^2/|V|k_{[r]}$, as required.

Suppose then that $|V(K)\cap V|\geq 2$. For each $j\in[r]$ we have from~\eqref{sunnight2} that $C'_{K,j}\leq n|V(K)\cap V|+2|V|\leq 2n|V(K)\cap V|$. Note that there are at most $n|V(K)\cap V|$ vertices $w\in V(G)\setminus V$ with $V(K)\cap V_w\neq\emptyset$.
Therefore, by~\eqref{tired}, and as $|V|\leq n$, we have
\begin{align}
\sum_{w\in V(G)\setminus V}|\psi'_w(K)|
&\leq
\left (r\cdot \frac{6n^{2}r}{|V|k_{[r]}}+n|V(K)\cap V|\cdot \frac{12n^{2}r^{2}}{|V|^{2}k_{[r]}}\right)\cdot 2n|V(K)\cap V| \nonumber
\\
&\leq \frac{36n^{4}r^2}{|V|^2k_{[r]}}|V(K)\cap V|^2.\label{green2}
\end{align}
Furthermore, as $|V|\leq n$, we have from~\eqref{thurs1} that
\begin{align*}
\frac12 \sum_{w\in V(G)\setminus V}|\psi_w(K)| \nonumber
\leq  \frac{3n^2r^2}{k_{[r]}}\left(\frac{3n|V(K)\cap V|}{|V|}\right)
\leq \frac{9n^4r^2}{|V|^2k_{[r]}}|V(K)\cap V|^2.
\end{align*}
Therefore, by~\eqref{green2} and the definition of $\psi$, $|\psi(K)|\leq 45n^4r^2|V(K)\cap V|^2/|V|^2k_{[r]}$, which completes the proof of~\ref{rand3}.
\end{proof}

Given a clique $K$ in the graph $G$, our aim is make all the corrections to the weight on the edges outside $K$, where if the sum of the corrections to made is $0$ then these corrections will naturally cancel out inside $K$ as well. 
We cannot use Lemma~\ref{allweightinto1set} to do this directly, as typically $V(K)\cap V_j$ will be not $j$-neighbour-rich for each $j\in [r]$. Indeed, this is equivalent to each vertex in $K$ being a neighbour of every vertex in the other classes.
Instead, we will use an intermediate set $V$ whose vertices are neighbours of each vertex in $V(K)$ except for the vertex in the same class. The set~$V$ will be large, so that the minimum degree condition for $G$ will imply that $V\cap V_j$ is $j$-neighbour-rich for each $j\in [r]$. We can then move the corrections needed onto the edges within $V$. Then, as $V(K)\cap V_j$ will be $j$-neighbour-rich in $G[V]$ for each $j\in [r]$, we can move the corrections into the set $V(K)$, to prove the following lemma.

\begin{lemma}\label{weightonto1clique} Let $r\geq 3$ and $n\ge 16r^2$. Let~$G$ be an $r$-partite graph on $(V_1,\ldots,V_r)$, where $|V_1|=\ldots=|V_r|=n$ and $\hat{\d}(G)\geq (1-1/16r^2)n$. For each $v\in V(G)$, let $z_v\in \R$, and for each $e\in E(G)$, let $z_{e}\in [-1,1]$. Suppose that for each $j\in[r]$, $i\in [r]\setminus\{j\}$ and $v\in V_j$ we have
\begin{equation}
\sum_{u\in N(v)\cap V_i}z_{uv}=z_v.\label{early}
\end{equation}
Suppose that, for each $i\in[r]$, $\sum_{v\in V_i}z_v=0$, and let $K\in \KK_{[r]}$. 

Then, there is a function $\phi:\KK_{[r]}\to \R$ such that the following hold.
\stepcounter{capitalcounter}
\begin{enumerate}[label = {\bfseries\Alph{capitalcounter}\arabic{enumi}}]
\item For each $e\in E(G)$, we have $\sum_{K'\in \KK_{[r]}:e\in E(K')}\phi(K')=z_{e}$.\label{fri1}
\item For each $K'\in \KK_{[r]}$,\label{fri2}
\begin{equation*}\label{sundaybbbb}
|\phi(K')|\leq \left\{
\begin{array}{ll}
10^3n^2r^2/k_{[r]} & \text{ if }V(K')\cap V(K)=\emptyset
\\
10^4n^3r/k_{[r]} &\text{ if }|V(K')\cap V(K)|=1
\\
10^4n^4|V(K')\cap V(K)|^2/4k_{[r]} & \text{ if }2\leq |V(K')\cap V(K)|\leq r.
\end{array}
\right.
\end{equation*}
\end{enumerate}
\end{lemma}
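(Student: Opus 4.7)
The plan is to apply Lemma~\ref{allweightinto1set} twice in succession: first to a large set $V$ consisting of common neighbours of $V(K)$, and then to $V(K)$ itself inside the induced subgraph $G'=G[V\cup V(K)]$. For each $j\in[r]$ set $W_j=(V_j\cap\bigcap_{u\in V(K)\setminus V_j}N(u))\setminus V(K)$; the minimum-degree hypothesis gives $|W_j|\geq (1-1/8r)n$. Pick $V\cap V_j\subseteq W_j$ of a common size $m$, so $|V|=rm$ and $V\cap V(K)=\emptyset$. Since no $r$-set outside $V_j$ has more than $n/16r$ non-neighbours in $V\cap V_j$, each $V\cap V_j$ is $j$-neighbour-rich in $G$, and by construction each $v_i\in V(K)$ is adjacent to every vertex of $(V\cup V(K))\setminus V_i$.

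In phase~1 I apply Lemma~\ref{allweightinto1set} to $G$ with this set $V$, obtaining a zero-sum $\psi_1$ that realises $z_e$ on every edge with $V(e)\not\subseteq V$. Because each $v_i\in V(K)$ lies outside $V$, every edge incident to $v_i$ is already corrected; summing yields the key identity
\[
y^1_{v_i}:=\sum_{K'\ni v_i}\psi_1(K')=\sum_{u\in N(v_i)\cap V_j}z_{uv_i}=z_{v_i}\qquad(j\neq i).
\]
I now define a residual problem on $G'$: put $z'_e=z_e-\sum_{K'\ni e}\psi_1(K')$ for $V(e)\subseteq V$ and $z'_e=0$ otherwise, together with $z'_v=z_v-y^1_v$ for $v\in V$ and $z'_{v_i}=0$. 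A direct check of vertex sums verifies the compatibility $\sum_uz'_{uv}=z'_v$ in $G'$; moreover, combining the hypothesis $\sum_{v\in V_j}z_v=0$, the zero-sum identity $\sum_{v\in V_j}y^1_v=0$ of $\psi_1$, and $y^1_{v_j}=z_{v_j}$ gives $\sum_{v\in V_j\cap(V\cup V(K))}z'_v=0$.

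In phase~2, Lemma~\ref{allweightinto1set}\ref{rand2} bounds $|z'_e|\leq 1+12n^2/m^2=O(1)$ on $V$-edges, so after a harmless rescaling I apply Lemma~\ref{allweightinto1set} to $G'$ with the small set $V(K)$. Each $\{v_j\}$ is trivially $j$-neighbour-rich in $G'$ by the construction of $V$, and $|V(K)\cap V_j|=1=|V(K)|/r$. Extending the resulting $\psi_2$ by zero to $\KK_{[r]}(G)$, set $\phi=\psi_1+\psi_2$. The total correction on every edge with $V(e)\not\subseteq V(K)$ is visibly $z_e$. The remaining case $e\in E(K)$ is the crux: the vertex-sum identity $\sum_{u\in N(v)\cap V_j\cap(V\cup V(K))}(\psi_2)_{uv}=\sum_{K'\ni v}\psi_2(K')$ is independent of $j$, and on mixed edges $(\psi_2)_{uv_i}=z'_{uv_i}=0$, so $(\psi_2)_{v_iv_j}$ must be the same constant $Y$ for every edge of $K$. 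The zero-sum of $\psi_2$ in class $V_j$ then reads $Y+\sum_{v\in V\cap V_j}y^2_v=0$, and since $y^2_v=z'_v$ for $v\in V$ (by the same identity, using $(\psi_2)_{vv_j}=0$) together with $\sum_{v\in V_j\cap(V\cup V(K))}z'_v=0$, we obtain $Y=0$. This yields \ref{fri1}.

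For the bound \ref{fri2} I combine Lemma~\ref{allweightinto1set}\ref{rand3} in the $|V|\geq nr/2$ regime for $\psi_1$ (giving a uniform $O(n^2r^2/k_{[r]})$ contribution) with the $|V|\leq n$ regime for $\psi_2$, using Proposition~\ref{cliqnos} to relate $k_{[r]}(G')$ to $k_{[r]}(G)$ with only a constant-factor loss, since $V\cup V(K)$ covers all but $O(n/r)$ vertices per class. The three regimes $|V(K')\cap V(K)|=0,1,\geq 2$ correspond directly to the three branches of \ref{rand3} applied to $\psi_2$, and the phase-1 contribution is absorbed in each case. The main conceptual obstacle in the whole argument is the vanishing of $Y$ on the edges of $K$; the entire choice of $V$ is engineered precisely so that $y^1_{v_i}=z_{v_i}$ holds automatically, allowing the global divisibility-type hypothesis $\sum_{v\in V_i}z_v=0$ to collapse the residual corrections inside $K$. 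The constant chasing in \ref{fri2} is then a routine bookkeeping exercise.
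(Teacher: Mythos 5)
Your proposal is correct and takes essentially the same route as the paper: two successive applications of Lemma~\ref{allweightinto1set}, first to a large neighbour-rich set built from common neighbours of $V(K)$ and then to $V(K)$ itself inside the induced subgraph, with the zero-sumness of the gadget functions and the divisibility hypothesis $\sum_{v\in V_i}z_v=0$ forcing the residual on $E(K)$ to vanish. The only cosmetic difference is that the paper includes $V(K)$ in $V$ (so $G'=G[V]$) while you exclude it (so $G'=G[V\cup V(K)]$), which leads to your ``$Y=0$'' argument in place of the paper's direct evaluation of $z''_{v_iv_j}$ from the same two identities.
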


\begin{proof} Let $V(K)=\{v_1,\ldots,v_r\}$ with $v_i\in V_i$ for each $i\in[r]$. Pick a set $V\subset\cap_{i\in[r]}(V_i\cup N(v_i))$ satisfying $|V\cap V_i|=n(1-1/8r)$ for each $i\in[r]$ and $V(K)\subset V$, where this is possible as $\hat{\d}(G)\geq (1-1/16r^2)n$. For each $j\in [r]$ and $A\subset V(G)\setminus V_j$ with $|A|\leq r$ we have
\[
|V\cap V_j\cap (\cap_{a\in A}N(a))|\geq |V\cap V_j|-|A|\cdot n/16r^2\geq |V\cap V_j|/2.
\]
Therefore, for each $j\in [r]$, $V\cap V_j$ is $j$-neighbour-rich. Thus, by Lemma~\ref{allweightinto1set} there is a zero-sum function $\psi:\KK_{[r]}\to\R$ such that the following hold.
\stepcounter{capitalcounter}
\begin{enumerate}[label =\bfseries \Alph{capitalcounter}\arabic*]
\item For each $e\in E(G)$ with $V(e)\not\subset V$, we have $\sum_{K'\in \KK_{[r]}:e\in E(K')}\psi(K')=z_{e}$.\label{rand12}
\item For each $e\in E(G)$ with $V(e)\subset V$, we have 
\[
\Big|\sum_{K'\in \KK_{[r]}:e\in E(K')}\psi(K')\Big|\leq 12n^2r^2/|V|^2\leq 24.
\]\label{rand22}
\item For each $K'\in \KK_{[r]}$, we have\label{rand32} $|\psi(K')|\leq 135n^2r^2/k_{[r]}$.
\end{enumerate}
For each $e\in E(G)$, let $z'_e=z_e-\sum_{K'\in \KK_{[r]}:e\in E(K')}\psi(K')$, so that, by \ref{rand12}, if $V(e)\not\subset V$, then $z'_e=0$, and by~\ref{rand22}, if $V(e)\subset V$, then $|z'_e|\leq 1+24=25$.

For each $v\in V(G)$, let $z'_v=z_v-\sum_{K'\in \KK_{[r]}:v\in V(K')}\psi(K')$. Note that, as $\psi$ is a zero-sum function, we have for each $i\in[r]$ that
\begin{equation}\label{lastminute3}
\sum_{v\in V_i}z'_v=\sum_{v\in V_i}z_v-\sum_{v\in V_i}\sum_{K'\in\KK_{[r]}:v\in V(K')}\psi(K')=0-\sum_{K'\in \KK_{[r]}}\psi(K')=0.
\end{equation}

Take a new graph $G'=G[V]$, and note that if $K'\in \KK_{[r]}(G)\setminus \KK_{[r]}(G')$, then $K'$ contains a vertex in $V(G)\setminus V$. Therefore, if for each $v\in V(G)$ $j(v)$ is such that $v\in V_{j(v)}$, then
\[
|\KK_{[r]}(G)\setminus \KK_{[r]}(G')|\leq \sum_{v\in V(G)\setminus V}k_{[r]\setminus \{j(v)\}}(G)\leq (rn-|V|)\cdot 2k_{[r]}(G)/n\leq k_{[r]}(G)/2,
\] 
where we have used Proposition~\ref{cliqnos} and the fact that $|V|=rn(1-1/8r)$.
Thus, $k_{[r]}(G')\geq k_{[r]}(G)/2$.

For each $v\in V(G)$ and $i\in [r]\setminus\{j(v)\}$ we have, as $z'_{uv}=0$ if $u\notin V$,
\begin{align}
\sum_{u\in N(v)\cap V_i\cap V}z'_{uv}&=\sum_{u\in N(v)\cap V_i}z'_{uv}\nonumber
=\sum_{u\in N(v)\cap V_i}z_{uv}-\sum_{u\in N(v)\cap V_i}\sum_{K'\in \KK_r(G):uv\in V(K')}\psi(K')\\
&\overset{\eqref{early}}{=}z_v-\sum_{K'\in \KK_r(G):v\in V(K')}\psi(K')=z'_v,\label{green3}
\end{align}
so we may use Lemma~\ref{allweightinto1set} with the weights $z'_e/25$ in the graph~$G'$.

For each $j\in[r]$ and $v\in V\setminus V_j$, we have by the choice of $V$ that $v\in N(v_j)$. Therefore, as $V(G')=V$, $\{v_j\}$ is $j$-neighbour-rich in $G'$ for each $j\in[r]$. Thus, by Lemma~\ref{allweightinto1set} with the set $V(K)$ and the weights $z'_e/25$, there is a zero-sum function $\psi':\KK_{[r]}(G')\to\R$ such that the following hold
\stepcounter{capitalcounter}
\begin{enumerate}[label =\bfseries \Alph{capitalcounter}\arabic*]
\item If $e\in E(G)$ and $V(e)\not\subset V(K)$, then $\sum_{K'\in \KK_{[r]}(G'):e\in E(K')}\psi'(K')=z'_{e}$.\label{rand13}

\item For each $K'\in \KK_{[r]}$, we have, using $k_{[r]}=k_{[r]}(G)$, $|V(K)|=r$, and that $k_{[r]}(G')\geq k_{[r]}/2$,\label{rand33}
\begin{equation*}\label{sundaybbbbb}
|\psi'(K')|\leq \left\{
\begin{array}{ll}
25\cdot 30n^2r^2/k_{[r]} & \text{ if }V(K')\cap V(K)=\emptyset
\\
25\cdot 102n^3r/k_{[r]} &\text{ if }|V(K')\cap V(K)|=1
\\
25\cdot 90n^4|V(K')\cap V(K)|^2/k_{[r]} & \text{ if }|V(K')\cap V(K)|\geq 2.
\end{array}
\right.
\end{equation*}
\end{enumerate}
Extend the domain of $\psi'$ to $\KK_{[r]}=\KK_{[r]}(G)$ by setting $\psi'(K')=0$ for each $K'\in \KK_{[r]}\setminus \KK_{[r]}(G')$, and for each $e\in E(G)$ let $z''_e=z'_e-\sum_{K'\in\KK_{[r]}:e\in E(K')}\psi'(K')$. For each $e\notin E(K)$, by~\ref{rand13}, $z''_e=0$.

Then for each $i,j\in[r]$ with $i\neq j$ we have, as $\psi'$ is a zero-sum function,
\begin{align*}
z''_{v_iv_j}&=\sum_{u\in V_i}\sum_{v\in N(u)\cap V_j}z''_{uv}
=\sum_{u\in V_i}\sum_{v\in N(u)\cap V_j}z'_{uv}-\sum_{u\in V_i}\sum_{v\in N(u)\cap V_j}\sum_{K'\in\KK_{[r]}:uv\in E(K')}\psi'(K')\nonumber
\\
&\overset{\eqref{green3}}{=}\sum_{u\in V_i}z'_{u}-\sum_{K'\in\KK_{[r]}}\psi'(K')
 \overset{\eqref{lastminute3}}{=}0-0=0.
\end{align*}
Thus, if we set $\phi(K')=\psi(K')+\psi'(K')$ for each $K'\in \KK_{[r]}$, then~\ref{fri1} holds.


To show~$\psi$ satisfies our requirements, it is left to show that~\ref{fri2} holds. Let $K'\in \KK_{[r]}$ and recall that $n\geq 16r^2$. If $V(K')\cap V(K)=\emptyset$, then by~\ref{rand32} and~\ref{rand33} we have $|\phi(K)|\leq 10^3n^2r^2/k_{[r]}$. If $|V(K')\cap V(K)|=1$, then by~\ref{rand32} and~\ref{rand33} we have $|\phi(K)|\leq (2550+135r/n)n^3r/k_{[r]}\leq 10^4n^3r/k_{[r]}$. If $|V(K')\cap V(K)|\geq 2$, then by~\ref{rand32} and~\ref{rand33} we have $|\phi(K)|\leq (2250+135r^2/n^2)n^4|V(K')\cap V(K)|^2/k_{[r]}\leq 2500n^4|V(K')\cap V(K)|^2/k_{[r]}$.
\end{proof}

Finally, we can now prove Theorem~\ref{fracdecomp} by applying Lemma~\ref{weightonto1clique} with each of the different $r$-cliques~$K$ in the graph and taking an average of the resulting gadgets. This averaging ensures that the weight of any clique is not adjusted by a large amount.

\begin{proof}[Proof of Theorem~\ref{fracdecomp}]
Let $r\geq 3$ and $n\in \N$. Let~$G$ be an $r$-partite graph on $(V_1,\ldots,V_r)$, where $|V_1|=\ldots=|V_r|=n$ and $\hat{\d}(G)\geq (1-1/10^6r^3)n$. Note that if $n<10^6r^3$, then $\hat{\d}(G)=n$, so $G$ is a complete $r$-partite graph and hence trivially has a fractional $K_r$-decomposition. Suppose then that $n\geq 10^6r^3$.

For each $e\in E(G)$, let $z_e$ be the number of $r$-cliques in $G$ containing $e$, so that $z_e=|\{K\in \KK_{[r]}:e\in E(K)\}|$.
By Lemma~\ref{cliqonedge}, $|z_en^2/k_{[r]}-1|\leq 9/10^6r^2$ for each $e\in E(G)$. Due to the minimum degree of $G$, $n^2\binom{r}{2}\geq e(G)\geq n^2\binom r2(1-1/10^6r^3)$, and thus $|z_ee(G)/\binom r 2k_{[r]}-1|\leq 1/10^5r^2$.

For each $v\in V(G)$, let $z_v$ be the number of cliques containing $v$. Then, for each $i\in[r]$ and $v\in V(G)\setminus V_i$,
\begin{equation}\label{lastlastminute}
\sum_{u\in N(v)\cap V_i}\Big(z_{uv}e(G)/\binom r 2k_{[r]}-1\Big)=z_v e(G)/\binom r 2k_{[r]}-d(v,V_i),
\end{equation}
and thus for each distinct $i,j\in[r]$
\begin{align}
\sum_{v\in V_j}\sum_{u\in N(v)\cap V_i}&\Big(z_{uv}e(G)/\binom r 2k_{[r]}-1\Big)=\sum_{v\in V_j}\Big(z_ve(G)/\binom r 2k_{[r]}-d(v,V_i)\Big) \nonumber\\
&=k_{[r]} e(G)/\binom{r}{2}k_{[r]}-\sum_{v\in V_j}d(v,V_i)=e(G)/\binom{r}{2}-d(V_j,V_i),\label{nullthing}
\end{align}
where $d(V_j,V_i)$ is the number of edges between $V_j$ and $V_i$.
Now, for each $k\in[r]\setminus\{i,j\}$, as $G$ is $K_r$-divisible we have
\[
d(V_i,V_k)=\sum_{v\in V_i}d(v,V_k)=\sum_{v\in V_i}d(v,V_j)=d(V_i,V_j).
\]
Therefore there is the same number of edges between any two classes, that is, for each distinct $i,j\in[r]$ we have $e(G)=\binom{r}{2}d(V_i,V_j)$, and thus, by~\eqref{nullthing}, we have $\sum_{v\in V_j}\sum_{u\in N(v)\cap V_i}(z_{uv}e(G)/\binom r 2k_{[r]}-1)=0$.

Therefore, if we initially weight each clique by $e(G)/\binom{r}{2}k_{[r]}$, then, using~\eqref{lastlastminute}, the corrections $z_{e}e(G)/\binom{r}{2}k_{[r]}-1$, $e\in E(G)$, required to achieve a fractional $K_r$-decomposition satisfy the requirements for us to apply Lemma~\ref{weightonto1clique} for each $K\in \KK_{[r]}$.
That is, for each $K\in \KK_{[r]}$, there is some function $\psi_K:\KK_{[r]}\to\R$ such that for each $e\in E(G)$, $\sum_{K':e\in E(K')}\psi_K(K')=z_ee(G)/\binom r 2k_{[r]}-1$, and, for each $K'\in \KK_{r}$,~\ref{fri2} in Lemma~\ref{weightonto1clique} holds with the function $10^5r^2\psi_K$, so that
\begin{equation}\label{sundaybbb}
|\psi_K(K')|\leq \left\{
\begin{array}{ll}
n^2/10^2k_{[r]} & \text{ if }V(K')\cap V(K)=\emptyset
\\
n^3/10rk_{[r]} &\text{ if }|V(K')\cap V(K)|=1
\\
n^4/10r^2k_{[r]} & \text{ if }|V(K')\cap V(K)|=2
\\
n^4/40k_{[r]} & \text{ if }3\leq |V(K')\cap V(K)|\leq r.
\end{array}
\right.
\end{equation}
For each $K'\in \KK_{[r]}$, there are at most $\sum_{i\in[r]}k_{[r]\setminus\{i\}}\leq 2rk_{[r]}/n$ cliques $K\in \KK_{[r]}$ with $|V(K')\cap V(K)|=1$, where we have used Proposition~\ref{cliqnos}. Furthermore, there are at most $\sum_{i,j\in[r]:i\neq j}k_{[r]\setminus\{i,j\}}\leq \binom{r}{2}2^2k_{[r]}/n^2\leq 2r^2k_{[r]}/n^2$ cliques $K\in \KK_{[r]}$ with $|V(K')\cap V(K)|=2$, where we have used Proposition~\ref{cliqnos}. Similarly, there are at most $\binom{r}{3}2^3k_{[r]}/n^3\leq 2r^3k_{[r]}/n^3$ cliques $K\in \KK_{[r]}$ with $|V(K')\cap V(K)|\geq 3$.

Thus, combining these calculations with~\eqref{sundaybbb}, we have for each $K'\in \KK_{[r]}$ that
\begin{align*}
\sum_{K\in \KK_{[r]}}|\psi_K(K')|&\leq n^2/10^2k_{[r]}\cdot k_{[r]}+n^3/10rk_{[r]}\cdot 2rk_{[r]}/n
\\
&\qquad + n^4/10r^2k_{[r]}\cdot 2r^2k_{[r]}/n^2 +n^4/40k_{[r]}\cdot 2r^3k_{[r]}/n^3
\\
&= n^2/10^2+n^2/5+ n^2/5+r^3n/20\leq 4n^2/5\leq e(G)/\binom{r}{2},
\end{align*}
where we have used that $n\geq 10^6r^3$.
Therefore, if for each $K'\in \KK_{[r]}$ we let $w_{K'}=(e(G)/\binom{r}{2}-\sum_{K\in\KK_{[r]}}\psi_{K}(K'))/k_{[r]}$, then $w_{K'}\geq 0$. Furthermore, for each $e\in E(G)$, we have 
\begin{align*}
\sum_{K'\in\KK_{[r]}:e\in E(K')}w_{K'}&=\frac{z_ee(G)}{\binom{r}{2}k_{[r]}}-\frac{1}{k_{[r]}}\sum_{K\in\KK_{[r]}}\sum_{K'\in\KK_{[r]}:e\in E(K')}\psi_{K}(K')
\\
&=\frac{z_ee(G)}{\binom r2 k_{[r]}}-\frac{1}{k_{[r]}}\sum_{K\in\KK_{[r]}}\Big(\frac{z_ee(G)}{\binom r 2k_{[r]}}-1\Big)=1.
\end{align*}
Thus, the weights $w_{K'}$, $K'\in \KK_{[r]}$, form a fractional $K_r$-decomposition of $G$.
\end{proof}

\section{Limitations of our method and possible improvements}\label{sec:improve}
The methods used here to prove Theorem~\ref{fracdecomp} are comparable to those used by Barber, K\"uhn, Lo, Osthus and the author~\cite{BKLMO} to find a fractional $K_r$-decomposition in non-partite graphs with a high minimum degree (where the implementation is much simpler). In~\cite{BKLMO} these techniques were then developed to reduce the required minimum degree. In this section, we will discuss the limitations of the techniques used in this paper, and the possibility of implementing some of the improvements from~\cite{BKLMO}.


Let us suppose we have a $K_r$-divisible $r$-partite graph $G$ with $n$ vertices in each class and $\hat{\delta}(G)=(1-\delta)n$, for some $\delta=\delta(r)$. By initially weighting each $r$-clique uniformly so that the weight on the individual edges is on average 1, we ensure that the weight on each edge is within the interval $(1-9\d r,1+9\d r)$ (as follows from Lemma~\ref{cliqonedge}). This means that by moving a proportion at most $9\d r$ of the weight on the edges around we can gain a fractional $K_r$-decomposition. However, our gadgets with which we move the weight around are  not very efficient. Most of the change caused by altering the weight of an $r$-clique is cancelled out by altering the weight of other $r$-cliques, leaving changes to the weight on only a small number of \emph{altered edges}. Examining the gadgets, we see that we only alter the weight of cliques containing at most one altered edge. This means that all but at most a proportion $1/\binom{r}{2}$ of the changes made by altering the individual weight of an $r$-clique are cancelled out by other changes. Due to this inefficiency, we can (only) use the gadgets to move a proportion $\Theta(1/r^2)$ of the weight on the edges. By taking $\delta=\e /r^3$, for some small constant $\e>0$, we can successfully move up to a proportion $9\d r=\Theta(1/r^2)$ of the weight on the edges around, and thus can correct the initial weighting to gain a fractional $K_r$-decomposition, proving Theorem~\ref{fracdecomp}.

In order to reduce the minimum degree required by these methods we could either find a way to reduce the corrections we need to make to the initial weighting or find a more efficient way to make those corrections. 
In the non-partite setting the comparable initial method would show that an $n$-vertex graph $G$ with $\delta(G)\geq (1-\e/r^3)n$ must have a fractional $K_r$-decomposition, for some small constant $\e=\e(r)>0$. In~\cite{BKLMO}, the authors reduced the corrections needed to the initial weighting
by iteratively removing copies of $K_r$ from $G$ until this could not be done without breaching the minimum degree condition. 
They also moved the weight more efficiently around the graph by simultaneously altering the weight on every edge incident to a fixed vertex. In combination, this reduced the minimum degree bound required by the methods to $\delta(G)\geq (1-\e/r^{3/2})n$.

It seems likely that in the partite setting we can similarly reduce the amount of weight we need to move around the graph after the initial weighting, and thus reduce the bound required in Theorem~\ref{fracdecomp} to $\hat{\delta}(G)\geq (1-\e/r^2)n$ (some further details are given below). In the partite setting, the technicalities involved in attempting to adapt the second round of improvements from~\cite{BKLMO} in order to move the weight around more efficiently are significant, but it is plausible further progress can be made in this manner. However, even if this is possible it seems very unlikely the minimum degree bound in Theorem~\ref{fracdecomp} could be reduced to give the correct dependence on $r$ using such techniques, let alone that Conjecture~\ref{conj} could be proved.

We will conclude by sketching how the amount of weight that is needed to be moved from the initial weighting could be reduced, where the (substantial) remaining details can be inferred from~\cite{BKLMO}. Starting with a $K_r$-divisible $r$-partite graph $G$ satisfying $\hat{\delta}(G)\ge (1-\delta)n$, with $\delta=\e/r^2$ for some small constant $\e>0$, we iteratively remove copies of $K_r$ from~$G$ until no further copies can be removed without violating the minimum degree condition. For each $i\in[r]$, let $X_i\subset V_i$ be the set of vertices~$v$ for which $d(v,V_j)\geq (1-\d)n+1$ for each $j\in[r]\setminus\{i\}$. We must have $\min_i|X_i|\leq \d r n$, for otherwise (as follows from the minimum degree condition) there would be a copy of $K_r$ with a vertex each in the sets $X_i$, $i\in[r]$, which we could remove without breaking the minimum degree condition, a contradiction. Letting $j\in[r]$ be such that $|X_j|\leq \delta rn$, for each $i\in[r]\setminus\{j\}$ we have
\begin{equation}\label{smalldeviation}
\sum_{v\in V_j}|d(v,V_i)-(1-\delta)n|\leq |X_j|\d n+n\leq 2\delta^2 rn^2.
\end{equation}
As the graph $G$ is $K_r$-divisible, there is the same number of edges between any two different vertex classes. Due to the minimum degree condition this must be at least $(1-\delta)n^2$ edges and from~\eqref{smalldeviation} it is at most $(1-\delta)n^2+2\delta^2rn^2$ edges.
The number of $r$-cliques containing an edge $xy\in E(G)$ with $x\in V_i$ and $y\in V_j$ is related to $|(N^c(x)\cup N^c(y))\setminus (V_i\cup V_j)|$ (this could be shown using methods in the proof of Lemma~\ref{cliqonedge} with adaptations similar to those found in~\cite{BKLMO}), and 
\begin{align*}
|(N^c(x)&\cup N^c(y))\setminus (V_i\cup V_j)|\\
&=2(r-2)n-(r-2)(d(x)+d(y))/(r-1)+|(N^c(x)\cap N^c(y))\setminus(V_i\cup V_j)|.
\end{align*}
As $G$ is close to a complete $r$-partite graph, the average of  $|(N^c(x)\cap N^c(y))\setminus(V_i\cup V_j)|$ over all edges $xy$ is small, and on average the degree of $x$ and $y$ does not deviate far from $(1-\delta)n$. This limits the average deviation of $|(N^c(x)\cup N^c(y))\setminus (V_i\cup V_j)|$ from its typical value, and could be used to show that on average the corrections to be made to the initial weight on each edge is $O(\d^2r^2)$, while the maximum change required to the weight on any edge remains $O(\d r)$ (as follows from Lemma~\ref{cliqonedge}). 

As $\delta=\e/r^2$, for sufficiently small $\e>0$ this average change is less than the change that can be made using the gadgets, which was $\Theta(1/r^2)$. However, potentially the required correction to the weight on some edges may be well above average (yet still, as noted, $O(\delta r)$). If an $r$-clique contains too many of these edges then this method may change the weight of the clique too much, risking the weight of that clique becoming negative. To avoid this, we could use gadgets in our initial constructions in Lemmas~\ref{weightK2r} and~\ref{weight4cycle} which only alter the weight of those cliques in which the weight on their edges do not on average need large adjustments (similarly to the methods used in~\cite{BKLMO}).


\section*{Acknowledgements}
The author is grateful to Peter Dukes, Deryk Osthus and Daniela K\"uhn for helpful comments and discussions.

\bibliographystyle{plain}
\bibliography{rhmreferences}

\begin{thebibliography}{10}

\bibitem{BKLMO}
B.A. Barber, D.~K{\"u}hn, A.~Lo, R.H. Montgomery, and D.~Osthus.
\newblock Fractional clique decompositions of dense graphs and hypergraphs.
\newblock {\em arXiv preprint arXiv:1507.04985}, 2015.

\bibitem{BKLO}
B.A. Barber, D.~K{\"u}hn, A.~Lo, and D.~Osthus.
\newblock Edge-decompositions of graphs with high minimum degree.
\newblock {\em Advances in Mathematics}, 288:337--385, 2016.

\bibitem{BKLOT}
B.A. Barber, D.~K{\"u}hn, A.~Lo, D.~Osthus, and A.~Taylor.
\newblock Clique decompositions of multipartite graphs and completion of latin
  squares.
\newblock {\em arXiv preprint}, 2015.

\bibitem{Dross}
F.~Dross.
\newblock Fractional triangle decompositions in graphs with large minimum
  degree.
\newblock {\em SIAM Journal on Discrete Mathematics, to appear}, 2016.

\bibitem{Dukes12}
P.J. Dukes.
\newblock Rational decomposition of dense hypergraphs and some related
  eigenvalue estimates.
\newblock {\em Linear Algebra and its Applications}, 436:3736--3746, 2012.

\bibitem{Dukescorr}
P.J. Dukes.
\newblock Corrigendum to {``}{R}ational decomposition of dense hypergraphs and
  some related eigenvalue estimates [{L}inear {A}lgebra {A}ppl.\
  436:3736--3746, (2012)]{''}.
\newblock {\em Linear Algebra and its Applications}, 467:267--269, 2015.

\bibitem{Dukespart}
P.J. Dukes.
\newblock Fractional triangle decompositions of dense 3-partite graphs.
\newblock {\em arXiv preprint arXiv:1510.08998}, 2015.

\bibitem{Garaschuk}
K.~Garaschuk.
\newblock {\em Linear methods for rational triangle decompositions}.
\newblock PhD thesis, University of Victoria, 2014.

\bibitem{HaRo}
P.E. Haxell and V.~R{\"o}dl.
\newblock Integer and fractional packings in dense graphs.
\newblock {\em Combinatorica}, 21(1):13--38, 2001.

\bibitem{Keevash}
P.~Keevash.
\newblock The existence of designs.
\newblock {\em arXiv preprint arXiv:1401.3665}, 2014.

\bibitem{Kirkman}
T.P. Kirkman.
\newblock On a problem in combinations.
\newblock {\em Cambridge and Dublin Math. J}, 2(191-204):1847, 1847.

\bibitem{Wilson}
R.M. Wilson.
\newblock Decomposition of complete graphs into subgraphs isomorphic to a given
  graph.
\newblock {\em Congressus Numerantium XV}, pages 647--659, 1975.

\bibitem{Yuster}
R.~Yuster.
\newblock Asymptotically optimal {$K_k$}-packings of dense graphs via
  fractional {$K_k$}-decompositions.
\newblock {\em Journal of Combinatorial Theory, Series B}, 95(1):1--11, 2005.

\end{thebibliography}

\end{document}